\theoremstyle{plain}
\newtheorem{thm}{Theorem}
\newtheorem{lemma}[thm]{Lemma}
\newtheorem{prop}[thm]{Proposition}
\newtheorem{claim}{Claim}
\newcommand{\N}{\mathbb{N}}
\newcommand{\Z}{\mathbb{Z}}
\begin{document}

\title{$f$-vectors of Simplicial Posets that are Balls}
\author{Samuel Kolins\\
Department of Mathematics,\\
Cornell University, Ithaca NY, 14853-4201, USA,\\
skolins@math.cornell.edu}

\maketitle

\begin{abstract}
Results of R. Stanley and M. Masuda completely characterize the $h$-vectors of simplicial posets whose order complexes are spheres.
In this paper we examine the corresponding question in the case where the order complex is a ball.
Using the face rings of these posets, we develop a series of new conditions on their $h$-vectors.
We also present new methods for constructing poset balls with specific $h$-vectors.
These results allow us to give a complete characterization of the $h$-vectors of simplicial poset balls up through dimension six.
\end{abstract}

\section{Introduction}

A \emph{simplicial poset} $P$ is a finite poset containing a minimal element $\hat{0}$ such that for every $p \in P$ the closed interval $[\hat{0},p]$ is a Boolean algebra.
For any simplicial poset $P$ there is a regular CW-complex $\Gamma(P)$ such that $P$ is the face poset of $\Gamma(P)$ (see  \cite{Bjorner84}).
The closed faces of $\Gamma(P)$ are simplexes, but two faces can intersect in a subcomplex of their boundaries instead of just a single face.
In particular, $\Gamma(P)$ can have multiple faces on the same vertex set.
Throughout this paper we will identify each closed face of $\Gamma(P)$ with the corresponding element of the poset $P$.

Let $\overline{P} = P - \{\hat{0} \}$.
The \emph{order complex} of $\overline{P}$, denoted $\Delta(\overline{P})$ is the simplicial complex whose vertices are the elements of $\overline{P}$ and whose faces are the chains of $\overline{P}$.
For a simplicial poset $P$, when we refer to the order complex of $P$ we mean $\Delta(\overline{P})$.
For any CW-complex $K$ let $|K|$ denote the geometric realization of $K$.
Then $|\Gamma(P)|$ and $|\Delta(\overline{P})|$ are homeomorphic (in fact, $\Delta(\overline{P})$ is isomorphic to the barycentric subdivision of $\Gamma(P)$).
Therefore, in the following we will often refer to topological properties of $|\Gamma(P)|$ or $|\Delta(\overline{P})|$ as being properties of the poset $P$.
In this paper we study simplicial posets that are balls.

The \emph{$i$th face number} of $\Gamma(P)$, denoted $f_i (\Gamma(P))$, is the number of $i$-dimensional faces of $\Gamma (P)$.
Equivalently, $f_i (\Gamma(P))$ is the number of elements $p \in P$ such that $[\hat{0},p]$ is a Boolean algebra of rank $i+1$.
In particular, $f_{-1}(\Gamma(P)) = 1$ corresponding to the empty set in $\Gamma(P)$ or the element $\hat{0}$ in $P$.
The $\emph{dimension}$ of $\Gamma(P)$ is the largest $i$ such that $f_{i}(\Gamma(P))$ is non-zero.
We define $f_i(P)$, the $i$th face number of the poset $P$, by $f_i(P) = f_i (\Gamma(P))$.
If the poset $P$ is clear from the context we often just write $f_i$ instead of $f_i(P)$ or $f_i (\Gamma(P))$.

Let $d-1$ be the dimension of $P$.
We record all of the face numbers of $P$ in a single vector $f(P) = (f_{-1}, f_0, f_1, \ldots , f_{d-1})$ called the \emph{f-vector} of $P$.
It will often be easier to work with an equivalent encoding of the face numbers called the \emph{h-vector}.
The entries $(h_0, h_1, \ldots h_d)$ of the h-vector are obtained from the face numbers by the relation
\begin{equation*}
\sum_{i=0}^d h_i x^i = \sum_{i=0}^{d} f_{i-1} x^i (1-x)^{d-i}.
\end{equation*}
In the case where $\Gamma(P)$ is a simplicial complex, this definition of the $h$-vector of $\Gamma(P)$ agrees with the standard definition of the $h$-vector of a simplicial complex.
Note that if $P$ is a simplicial poset of dimension $d-1$ then $h_d = \sum_{i=0}^d (-1)^{d-i} f_{i-1} = (-1)^{d-1} \tilde\chi (|\Gamma(P)|)$.
In particular, if $P$ is a $(d-1)$-ball then $h_d = 0$.
In many cases we will study the differences between consecutive entries of the $h$-vector.
We therefore define $g_i = h_i - h_{i-1}$, with $g_0 := h_0 = 1$.

A significant area of study is attempting to characterize the possible $f$-vectors for various types of simplicial posets.
Complete characterizations are already known for Cohen-Macaulay posets (see section \ref{S:Background}) and spheres.
For simplicial posets that are spheres, sufficiency was proved by Stanley \cite[Theorem 4.3, Remark 4]{Stanley91} and necessity by Masuda \cite[Corollary 1.2]{Masuda}.
\begin{thm}{\label{T:Spheres}}
Let $\mathbf{h}= (h_0,h_1, \ldots ,h_d) \in \Z^{d+1}$.
Then there exists a simplicial poset $P$ with $h(P) = \mathbf{h}$ and $|\Gamma(P)|$ a sphere if and only if 
$h_0 =1$,
$h_i = h_{d-i}$ for $0 \leq i \leq d$, and either $h_i > 0$ for $0 \leq i \leq d$ or $\sum_{i=0}^d h_i$ is even.
\end{thm}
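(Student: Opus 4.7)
The plan is to separate the theorem into the necessity direction (due to Masuda) and the sufficiency direction (due to Stanley), which require quite different techniques.

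For necessity, I would work through the face ring $A_P$ of the simplicial poset $P$, a commutative graded $\mathbb{k}$-algebra whose Hilbert series is determined by the $f$-vector. That $h_0 = 1$ is immediate from $f_{-1} = 1$. When $|\Gamma(P)|$ is a sphere, $A_P$ is a Gorenstein ring of Krull dimension $d$; choosing a linear system of parameters $\theta_1,\ldots,\theta_d$, the quotient $A_P/(\theta_1,\ldots,\theta_d)$ is an Artinian Gorenstein $\mathbb{k}$-algebra whose Hilbert function equals the $h$-vector. Symmetry of the Hilbert function of an Artinian Gorenstein algebra then yields $h_i = h_{d-i}$. For the parity condition, observe that plugging $x = 1$ into the defining relation gives $\sum_{i=0}^d h_i = f_{d-1}$, so the condition asserts that whenever some $h_i = 0$, the poset has an even number of facets. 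Following Masuda, when some intermediate $h_i$ vanishes one derives an additional algebraic constraint on the Artinian reduction that forces $f_{d-1}$ to be even; the argument exploits the interaction between the Gorenstein pairing $A_i \times A_{d-i} \to A_d \cong \mathbb{k}$ and the multiplicative structure of the face ring.

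For sufficiency, I would build the required posets constructively from two basic families. The boundary of the $d$-simplex realizes $h = (1,1,\ldots,1)$, while the ``doubled simplex'' (two $(d-1)$-faces sharing the same vertex set and identical lower boundary) realizes $h = (1,0,\ldots,0,1)$. General admissible $h$-vectors are then obtained by iterating two operations: connected sum of simplicial poset spheres (identify and delete a pair of facets) and stellar-type moves that insert new vertices into existing facets. The effect of each operation on the $h$-vector is straightforward to compute and preserves the palindromic symmetry, so an inductive argument on the entries to be realized shows that every symmetric sequence with $h_0 = 1$ that is either strictly positive or has even total sum can be reached. The strictly positive case is covered by connected sums of simplex boundaries, while sequences with a zero entry require at least one doubled-simplex building block, which is precisely what forces $f_{d-1} = \sum h_i$ to be even.

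The main obstacle is the necessity of the parity condition, Masuda's contribution. The symmetry of $h$ and the value $h_0 = 1$ are standard consequences of the Gorenstein property and the presence of the empty face, but deriving a parity restriction from the vanishing of a single entry $h_i$ requires going beyond the Hilbert function to the multiplicative structure of the Artinian reduction, and is the deepest ingredient. On the sufficiency side, the main technical hurdle is the bookkeeping needed to confirm that the iterative construction reaches every admissible $h$-vector without introducing unintended entries; one must verify that each local modification changes $h_i$ and its symmetric counterpart $h_{d-i}$ by a predictable amount while leaving the remaining entries untouched.
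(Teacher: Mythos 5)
This theorem is \emph{cited} in the paper rather than proved---the text explicitly attributes sufficiency to Stanley and necessity to Masuda---so there is no in-paper proof to compare your attempt against. Taking your sketch on its own terms: the necessity side is broadly on the right track. The symmetry $h_i = h_{d-i}$ does follow from the fact that $A_P$ is Gorenstein when $|\Gamma(P)|$ is a sphere, and Masuda's parity result is indeed proved through the multiplicative structure of the Artinian reduction; the paper itself records the relevant algebraic fact as Proposition \ref{T:SphereZeroAlg}, which says that vanishing of a facet monomial $x_{v_1}\cdots x_{v_d}$ in $A_P/\Theta$ forces an even number of facets on that vertex set.

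The sufficiency side, however, has a real gap. Connected sums of boundaries of simplices only produce $h$-vectors of the form $(1,k,k,\ldots,k,1)$, a one-parameter family, and connected sum with a doubled simplex is the identity on $h$-vectors (removing one of its two facets leaves a plain simplex, so you are just replacing a facet of the other sphere by an isomorphic copy). Thus the ``doubled simplex'' does not supply the parity mechanism you describe, and that sentence also conflates necessity with sufficiency. Stellar moves do not repair this: they keep you inside the world of PL spheres whose $h$-vectors satisfy polytope-type constraints, so a vector such as $(1,1,5,1,1)$---which Theorem \ref{T:Spheres} asserts is realizable, and which violates the $g$-conditions because $g_1=0$ forces $g_2\le 0$---cannot be reached. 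Stanley's actual construction is a direct shelling: one prescribes, for each facet in the shelling order, a restriction face of the correct cardinality and uses the freedom of simplicial posets to reuse existing vertex sets so that the attachments are consistent and the last facet closes the complex up into a sphere. The paper's own constructions for balls in Section \ref{S:Constructions} (see Theorem \ref{T:basicconstruction}) follow exactly this shelling paradigm, which is the template you should adapt rather than a surgery-and-subdivision scheme.
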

The condition $h_i = h_{d-i}$ means that when $d$ is odd $\sum_{i=0}^d h_i$ is always even.
In the case where $d$ is even the parity of $\sum_{i=0}^d h_i$ is equal to the parity of $h_{d/2}$.

In this paper we investigate the question of characterizing the $h$-vectors of posets $P$ such that $|\Gamma(P)|$ is a ball.
In section \ref{S:Boundary} we relate the $h$-vector of a poset that is a ball to the $h$-vector of its boundary poset, which is a sphere.
This will allow us to translate the known conditions on the $h$-vectors of posets that are spheres to conditions on our ball.
However, as in the case of simplicial \emph{complexes} that are balls, this is not sufficient to completely characterize the $h$-vectors of balls \cite{KolinsBall}.

In sections \ref{S:Monotonicity} and \ref{S:Parity} we develop additional necessary conditions on the $h$-vectors of balls.
In section \ref{S:Monotonicity} we show that when $h_1$ of the boundary sphere is zero there is a surjective map from the face ring of the ball modulo a linear system of parameters to the face ring of the boundary sphere modulo a related linear system of parameters.
This results in a series of new inequalities relating the entries of the $h$-vectors of the ball and the boundary sphere.
Section \ref{S:Parity} gives a number of conditions on the $h$-vector of a ball that force the sum of the entries of the $h$-vector to be even.
All of these conditions require that some entry of $h$-vector of the ball is zero (besides $h_d$, which is always zero) and that some entry of the $h$-vector of the boundary sphere is also zero.
The first two results in this section follow from counting arguments on the facets of the ball.
The other two conditions come from adding to our ball the cone over the boundary the ball, resulting in a sphere of the same dimension as the original ball.
We then look at the restriction map from the face ring of this new sphere to the face ring our original ball and use this map to transfer known conditions on the sphere to conditions on the ball.

In section \ref{S:Constructions} we present constructions for obtaining simplicial posets that are balls and have specific $h$-vectors.
These constructions all use the idea of shellings \cite[Definition 4.1]{Bjorner84} to ensure that the resulting complex is a ball and that it has the desired $h$-vector.
We present two general constructions as well as a third result that yields additional $h$-vectors in dimension five.
We conclude the paper in section \ref{S:Summary} by using the previous results to give a complete characterization of the $h$-vectors of simplicial posets that are balls up through dimension 6.
We also discuss what types of problems remain in the higher dimensional cases.

\section{Notation and Background}{\label{S:Background}}

In this section we provide background on many of the ideas mentioned in the introduction,
as well as some additional useful results of Masuda about simplicial poset spheres.

\subsection{Shellings of $\Gamma(P)$}
The \emph{facets} of any CW-complex are the maximal faces with respect inclusion.
A CW-complex is \emph{pure} if all of its facets have the same dimension.
Let $P$ be a simplicial poset of dimension $d-1$.
Then the facets of $\Gamma(P)$ correspond to the maximal elements of $P$.

Consider the case where $\Gamma(P)$ is a pure complex.
A \emph{shelling} of $\Gamma (P)$ is an ordering $F_1, F_2, \ldots , F_t$ of the (closed) facets of $\Gamma (P)$ such that $F_j \cap ( \cup_{i=1}^{j-1} F_i)$ is a union of (closed) \emph{facets} of $\partial F_j$. 
This is equivalent to the definition of a shelling of a CW-complex used by Bj{\"o}rner \cite[Definition 4.1]{Bjorner84} specialized to the case of $\Gamma(P)$ for a simplicial poset $P$.
Define the \emph{restriction face} of $F_k$, denoted $\sigma(F_k)$, to be the set of vertices $v$ of $F_k$ such that the facet of $\partial F_k$ not containing $v$ is in $\cup_{i=1}^{k-1} F_i$.
Then the entries of the $h$-vector of $P$ are given by $h_j = |\{F_k : |\sigma (F_k)| = j \}|$.
We also use $\sigma(F_k)$ to refer to the face of $F_k$ containing exactly the vertices in the set $\sigma(F_k)$.

\subsection{Cones of Posets}

Given a simplicial poset $P$, the \emph{cone} over $P$ is the simplicial poset $P \times [1,2]$ where $[1,2]$ is the poset of two elements with $2 > 1$.
More specifically, the elements of $P \times [1,2]$ are the ordered pairs $(p,i)$ where $p \in P$ and $i \in \{1,2\}$ and the covering relations are:
\begin{itemize}
 \item If $p$ covers $q$ in $P$, then $(p,i)$ covers $(q,i)$ in $P \times [1,2]$ for $i \in \{1,2\}$.
 \item For all $p \in P$, $(p,2)$ covers $(p,1)$ in $P \times [1,2]$.
\end{itemize}
Topologically, $|\Gamma(P \times [1,2])|$ is the cone over the complex $|\Gamma(P)|$.
The dimension of $P \times [1,2] $ is one greater than that of $P$.
A straightforward calculation shows that the $h$-vector of $P \times [1,2]$ is the same as that of $P$ except augmented by $h_{d+1} = 0$.

\subsection{The Face Ring of a Simplicial Poset}
We now describe Stanley's idea of the face ring of a simplicial poset \cite{Stanley91}.
Let $k$ be an infinite field.
Define $S := k[x_p : p \in \overline{P}]$ to be the polynomial ring over $k$ with variables indexed by the elements of $\overline{P}$.
We define a grading on $S$ by letting the degree of $x_p$ be one more than the dimension of the face in $\Gamma(P)$ corresponding to $p$ (so $[\hat{0},p]$ is a Boolean algebra of rank equal to the degree of $x_p$).
For elements $p$ and $q$ in $P$ the \emph{meet} of $p$ and $q$, denoted $p \wedge q$, is the largest element that is less than both $p$ and $q$.
Since $P$ is a simplicial poset we know that the element $p \wedge q$ is well defined whenever $p$ and $q$ have a common upper bound.
Define $I_P$ to be the ideal of $S$ generated by all elements of the form $x_p x_q - {x_{p \wedge q}} \sum_r x_r$ where the sum is over all minimal upper bounds $r$ of $p$ and $q$.
In the case where $p$ and $q$ have no common upper bound in $P$, this reduces to the element $x_p x_q$.
Define the \emph{face ring} of $P$ to be $A_P := S / I_P$.

\subsection{Cohen-Macaulay Simplicial Posets}
A simplicial poset $P$ is \emph{Cohen-Macaulay} if its order complex $\Delta(\overline{P})$ is a Cohen-Macaulay simplicial complex.
For a simplicial complex $\Delta$, Munkres \cite{Munkres} proved that $\Delta$ is Cohen-Macaulay if and only if for all points $p \in |\Delta|$ and all $i < \dim \Delta$, $\tilde{H}_i(|\Delta|;k) = H_i(|\Delta|,|\Delta|-p;k)=0$.
In particular, whenever $|\Delta(\overline{P})| \cong |\Gamma(P)|$ is a ball or sphere $P$ is Cohen-Macaulay.
Stanley proved that a simplicial poset $P$ is Cohen-Macaulay if and only if its face ring $A_P$ is a Cohen-Macaulay ring \cite[Corollary 3.7]{Stanley91}.
Using this result, Stanley \cite[Theorem 3.10]{Stanley91} proved that if $Q$ is a Cohen-Macaulay simplicial poset then $h_0(Q) = 1$ and $h_i(Q) \geq 0$ for $i \geq 1$ (he also proved that these are sufficient conditions to characterize Cohen-Macaulay simplicial posets).
Stanley's book \cite{StanleyCCA} is a good reference for more information on Cohen-Macaulay rings and complexes.

Let $T = T_0 \oplus T_1 \oplus \cdots$ be any finitely generated graded algebra over the (infinite) field $k = T_0$.
The \emph{Hilbert function} of $T$ is defined by $F(T,i) :=  \dim_k T_i$ where $i \geq 0$.
Let $d = \dim T$.
Then a linear system of parameters (l.s.o.p) for $T$ is a collection of elements $\theta_1, \ldots , \theta_d \in T_1$ such that $T$ is finitely generated as a $k[\theta_1, \ldots , \theta_d]$-module.
From \cite[Theorem 3.10]{Stanley91} we know that when $P$ is a Cohen-Macaulay simplicial poset $\dim A_P = \dim (\Gamma(P)) +1$ and a l.s.o.p. for $A_P$ exists.
Further, by \cite[Theorem 3.8]{Stanley91} when $P$ is a Cohen-Macaulay simplicial poset and $\theta_1, \ldots , \theta_d$ is a l.s.o.p. for $A_P$ we have $F(A_P/(\theta_1, \ldots , \theta_d),i) = h_i(P)$.

\subsection{Additional Results about Simplicial Poset Spheres}
As mentioned in the introduction, Theorem \ref{T:Spheres} gives a complete characterization of the possible $h$-vectors of simplicial posets that are spheres.
In addition to the numerical result we will often need some of the stronger statements that were used to prove the necessity of the claim.
The following theorem is discussed on pages 343-344 of the original proof of necessity due to Masuda \cite{Masuda} and is Theorem 2 in the paper \cite{MillerReiner} published two years later by Miller and Reiner giving a simplified proof of Masuda's result.

\begin{thm}{\label{T:SphereZero}}
Let $P$ be a simplicial poset such that $|\Gamma(P)|$ is a $(d-1)$-sphere.
If $h_i(P) = 0$ for some $i$ strictly between zero and $d$, then for every subset $V = \{v_1, \ldots , v_d\}$ of the vertices of $\Gamma(P)$ there is an even number of facets of $\Gamma(P)$ with vertex set $V$.
\end{thm}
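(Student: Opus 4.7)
The plan is to work in the face ring $A_P$ modulo a suitable linear system of parameters over $\mathbb{F}_2$, converting the combinatorial parity assertion into an algebraic vanishing statement inside the socle.

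The first step is to establish, for any $d$-subset $V = \{v_1, \dots, v_d\}$ of vertices of $\Gamma(P)$, the identity
\[
x_{v_1} x_{v_2} \cdots x_{v_d} \;=\; \sum_{F \,:\, V(F) = V} x_F \qquad \text{in } A_P,
\]
where the sum is over all facets with vertex set exactly $V$. This is an induction on the number of factors using the defining relation $x_p x_q = x_{p \wedge q}\sum_r x_r$: at each stage $p \wedge q = \hat 0$ because the vertex sets are disjoint, and the minimal upper bounds of an $i$-face on $\{v_1, \dots, v_i\}$ together with a new vertex $v_{i+1}$ are precisely the $(i+1)$-faces on $\{v_1, \dots, v_{i+1}\}$, each appearing exactly once because $[\hat 0, f]$ is Boolean.

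The second step uses that $|\Gamma(P)|$ is a sphere, so $A_P$ is Gorenstein with $h_d(P) = 1$. For any l.s.o.p.\ $\Theta = (\theta_1, \dots, \theta_d)$, the algebra $A := (A_P \otimes \mathbb{F}_2)/\Theta$ is a finite-dimensional graded Gorenstein $\mathbb{F}_2$-algebra with a one-dimensional socle concentrated in top degree $d$. The hypothesis $h_i(P) = 0$ gives $A_i = 0$, so in particular $x_{v_1}\cdots x_{v_i} = 0$ in $A$; multiplying by the remaining variables and invoking the identity above yields
\[
\sum_{F \,:\, V(F) = V} x_F \;=\; 0 \qquad \text{in } A.
\]

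The final step is to produce a linear functional $\phi : A_d \to \mathbb{F}_2$ such that $\phi(x_F) = 1$ for every facet $F$; applying $\phi$ then forces $m_V := |\{F : V(F) = V\}| \equiv 0 \pmod 2$, which is the conclusion. Existence of such a $\phi$ reflects the fact that over $\mathbb{F}_2$ every facet of a sphere contributes $+1$ to the unique nonzero fundamental class. I would establish it by choosing $\Theta$ carefully---for instance via an $\mathbb{F}_2^d$-valued coloring of the vertices of $\Gamma(P)$ in the style of Masuda's original argument---and then identifying the one-dimensional socle $A_d$ with $\tilde H_{d-1}(|\Gamma(P)|;\mathbb{F}_2) \cong \mathbb{F}_2$ so that the class of each $x_F$ becomes the value of the fundamental cochain on $F$. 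The main obstacle is exactly this step: the first two are formal manipulations in $A_P$, whereas producing the ``$+1$ on every facet'' functional is where the sphere hypothesis enters in an essential, topological way, and this is where the core content of Masuda's theorem resides.
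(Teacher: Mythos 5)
The paper itself does not prove Theorem \ref{T:SphereZero}; it cites Masuda's original argument and Theorem 2 of Miller and Reiner, so there is no internal proof to compare your attempt against. Your plan follows the same general spirit of translating the parity claim into socle arithmetic in the face ring modulo a linear system of parameters. Step 1 is a correct formal calculation: the identity $x_{v_1}\cdots x_{v_d} = \sum_{F : V(F) = V} x_F$ in $A_P$ follows by the induction you describe, because each face $r$ with vertex set $\{v_1,\dots,v_{i+1}\}$ contains a unique subface on $\{v_1,\dots,v_i\}$ and therefore appears exactly once in the expansion. Step 2 is then an immediate degree count once the desired $\Theta$ is in hand: $(A_P/\Theta)_i = 0$ kills the degree-$i$ partial product, hence the full product and the facet sum.

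The gap is entirely in Step 3, and it is not a small one: all of the theorem's content has been collected there. Two things remain to be supplied. First, you need an l.s.o.p.\ over $\mathbb{F}_2$ (or over a larger characteristic-$2$ field together with a descent argument). By Lemma \ref{L:lsop} an $\mathbb{F}_2$-l.s.o.p.\ amounts to a coloring of the vertices by vectors in $\mathbb{F}_2^d$ making every facet's vectors linearly independent, and it is not automatic that such a coloring exists for an arbitrary simplicial poset sphere. Second, and more fundamentally, you need every facet monomial $x_F$ to map to the \emph{same} nonzero element of the one-dimensional socle $A_d$. That is essentially the content of Proposition \ref{T:SphereZeroAlg}, which the paper presents as a byproduct of the \emph{proof} of Theorem \ref{T:SphereZero}, not as an ingredient already available; assuming it here would leave the argument circular. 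The identification of $A_d$ with the top homology of the sphere and the constancy of the images of the $x_F$ under it are exactly where Masuda's and Miller and Reiner's work lies, and you acknowledge as much. What you have written is a clean reduction of the theorem to its algebraic core, but the core step remains unproven.
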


One consequence of the proof of Theorem \ref{T:SphereZero} is the following result
that relates the parity of the number of facets on a vertex set to the product of those vertices in the algebra $A_P$.

\begin{prop}{\label{T:SphereZeroAlg}}
Let $P$ be a simplicial poset such that $|\Gamma(P)|$ is a $(d-1)$-sphere and let $V = \{v_1, \ldots , v_d\}$ be a subset of the vertices of $\Gamma(P)$.
Let $\Theta$ be a l.s.o.p. for $A_P$.
If $x_{v_1} \cdots x_{v_d}$ is zero in $A_P / \Theta$ then there are an even number of facets of $\Gamma(P)$ with vertex set $V$.
\end{prop}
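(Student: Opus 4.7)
The plan is to deduce the proposition from the algebraic content of the Masuda/Miller--Reiner proof of Theorem \ref{T:SphereZero}, together with a direct expansion of the product $x_{v_1} \cdots x_{v_d}$ in $A_P$. First, I would establish by induction on $k$, using iterated application of the defining relation $x_p x_q = x_{p \wedge q} \sum_r x_r$, that
\[
x_{v_1} x_{v_2} \cdots x_{v_k} \;=\; \sum_{G} x_G \quad \text{in } A_P,
\]
where $G$ runs over rank-$k$ elements of $P$ with atom set $\{v_1, \ldots, v_k\}$. The base case $k=2$ follows since $v_1 \wedge v_2 = \hat{0}$ and the minimal upper bounds of $v_1, v_2$ are precisely the rank-$2$ elements with that atom set. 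For the inductive step, any rank-$k$ element $G$ with atom set $\{v_1, \ldots, v_k\}$ satisfies $G \wedge v_{k+1} = \hat{0}$ (since $v_{k+1}$ is not an atom of $G$), so $x_G x_{v_{k+1}} = \sum_r x_r$ with $r$ ranging over rank-$(k+1)$ elements above both $G$ and $v_{k+1}$. Each such $r$ has atom set $\{v_1, \ldots, v_{k+1}\}$ and contains a unique rank-$k$ subelement in $[\hat{0}, r]$ with atom set $\{v_1, \ldots, v_k\}$, so each rank-$(k+1)$ element appears exactly once. Since $P$ is pure of dimension $d-1$ (as a sphere), the rank-$d$ elements are precisely the facets, giving $x_{v_1} \cdots x_{v_d} = \sum_{F} x_F$ in $A_P$, summed over facets $F$ with vertex set $V$.

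Second, I would apply the key algebraic fact underlying the proof of Theorem \ref{T:SphereZero}. Because $|\Gamma(P)|$ is a $(d-1)$-sphere, $A_P$ is Gorenstein and $(A_P/\Theta)_d$ is one-dimensional, with a chosen generator (the fundamental class) $\omega$. The central algebraic content of \cite{Masuda, MillerReiner} is that each facet monomial $x_F$ represents $\omega$ in $(A_P/\Theta)_d$ modulo $2$; equivalently, with an integral lift, each $x_F$ reduces to an odd-integer multiple of $\omega$ in the socle $\Z \omega$ of the integral face ring modulo an integral l.s.o.p. Combined with Step~1, this yields
\[
x_{v_1} \cdots x_{v_d} \;\equiv\; \bigl|\{F : \mathrm{vert}(F) = V\}\bigr| \cdot \omega \pmod{2}
\]
in $(A_P/\Theta)_d$. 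Thus if $x_{v_1} \cdots x_{v_d} = 0$ in $A_P/\Theta$, the number of facets with vertex set $V$ must vanish modulo $2$, i.e., must be even.

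The principal technical difficulty is Step~2: rigorously extracting from the Masuda and Miller--Reiner proofs the assertion that each facet monomial $x_F$ equals the fundamental class modulo $2$ in the top-degree socle, uniformly across choices of infinite field $k$ and l.s.o.p.~$\Theta$. The standard way to handle this is to perform the key computations integrally over $\Z[x_p]/I_P$ with a suitable integral l.s.o.p.\ (as in Miller--Reiner), and then to base-change to $(k, \Theta)$ while tracking the mod-$2$ congruence of each $x_F$. Once this integral-to-field transfer is in place, the proposition follows at once from the identity established in Step~1.
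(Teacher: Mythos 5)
Your Step 1 is correct and well argued: the iterated use of the defining relations of $I_P$ does give $x_{v_1}\cdots x_{v_d}=\sum_F x_F$, summed over facets $F$ with vertex set $V$, using the well-definedness of meets and the purity of a sphere exactly where needed. The paper does not reprove the Proposition at all---it merely asserts it as a consequence of the Masuda and Miller--Reiner proofs---so making this expansion explicit is a genuine contribution of your write-up.

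The gap is in Step 2, and you have essentially located it yourself but not closed it. The claim ``each facet monomial $x_F$ represents $\omega$ modulo $2$ in $(A_P/\Theta)_d$'' does not parse over an arbitrary infinite field $k$, and your proposed fix---compute over $\Z[x_p]/I_P$ with a suitable integral l.s.o.p.\ and then base-change to $(k,\Theta)$---does not address the actual hypothesis. The Proposition is stated for an arbitrary l.s.o.p.\ $\Theta$ over an arbitrary infinite field; the given $\Theta$ need not arise by base change from any $\Z$-l.s.o.p., so you cannot freely trade the hypothesis $\prod x_{v_i}=0$ in $A_P/\Theta$ for the corresponding integral statement. Even when $\Theta$ is integral, the relation $\sum_F c_F\,\omega=0$ over $k$ only gives $\sum_F c_F\equiv 0\ (\mathrm{mod}\ \mathrm{char}\,k)$; for odd positive characteristic this does not force $\sum_F c_F$ to vanish over $\Z$, and hence does not force $N$ to be even just because each $c_F$ is odd. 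What the argument really needs is a $\Theta$- and $k$-independent formulation of the vanishing of $x_{v_1}\cdots x_{v_d}$ in the one-dimensional socle---equivalently, that its image is controlled by a fixed integer (e.g.\ $\sum_F c_F$ for one chosen integral $\Theta$), nonzero over $k$ iff nonzero over $\Z$. That invariance is the real content hiding behind the citation to Masuda and Miller--Reiner, and your proof needs to state and justify it explicitly before the mod-$2$ count can finish the argument.
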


\section{The $h$-vector of the Boundary of a Simplicial Poset}{\label{S:Boundary}}

The goal of this section is to relate the $h$-vector of a simplicial poset whose geometric realization is a manifold with boundary to the $h$-vector of the boundary complex.
In the case of balls, this will allow us to use Theorem \ref{T:Spheres} about the $h$-vectors of posets that are spheres to restrict the possible $h$-vectors of balls.

Our starting point is a paper by I.G. Macdonald \cite[Theorem 2.1]{Macdonald71} that gives the desired relationship for the case of simplicial complexes whose geometric realizations are manifolds with boundary.
The entire first section of Macdonald's paper is done in the generality of cell complexes and applies in our case.
When Macdonald specializes to the case of simplicial complexes to prove Theorem 2.1, the only property of simplicial complexes that he uses is the fact that for each simplex $y$ in the complex the interval $[\hat{0},y]$ in the face poset is a Boolean algebra.
Since this fact is true for simplicial posets, Macdonald's result holds in this more general setting as well.
Expressing his result in terms of $h$- and $g$-vectors we have the following theorem.

\begin{thm}{\label{GDS}}
Let $P$ be a $(d-1)$-dimensional simplicial poset such that $|\Gamma(P)|$ is a manifold with boundary.
Then
\[ h_{d-i}(P) - h_i(P) = \binom{d}{i} (-1)^{d-1-i} \tilde\chi (|\Gamma(P)|) - g_i(\partial \Gamma(P)) \]
for all $0 \leq i \leq d$.
\end{thm}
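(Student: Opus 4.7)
The plan is to reduce the identity directly to Macdonald's Theorem 2.1 in \cite{Macdonald71}, which establishes exactly this sort of relation for simplicial \emph{complexes} that are manifolds with boundary, and then translate his $f$-vector identity into the $h$- and $g$-vector form displayed above. The main observation, already emphasized in the paragraph preceding the theorem, is that Macdonald proceeds in two stages: Section~1 of his paper derives Euler-type identities for arbitrary regular CW-complexes whose geometric realizations are manifolds with boundary, and Section~2 specializes these identities to the simplicial setting. The only feature of simplicial complexes invoked in the specialization is that the interval $[\hat 0, y]$ below each face $y$ is a Boolean algebra, and this is precisely the defining property of simplicial posets.

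Carrying out the argument, I would first record that $\Gamma(P)$ is a regular CW-complex and, by hypothesis, a manifold with boundary, so Macdonald's Section~1 identities apply to it verbatim. Next I would rerun his Section~2 derivation with each ``simplex'' replaced by an element of $P$, and each enumeration of subfaces replaced by the corresponding enumeration over elements whose principal order ideal is a prescribed Boolean subalgebra. Nothing in his local argument is sensitive to whether distinct faces of $\Gamma(P)$ share vertex sets; the bookkeeping is entirely internal to the Boolean interval below a fixed face. The output is Macdonald's $f$-vector identity, now valid for the simplicial poset $P$ and its boundary poset $\partial \Gamma(P)$, together with the reduced Euler characteristic $\tilde\chi(|\Gamma(P)|)$.

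The final step is the translation to $h$-vectors. Using the defining relation
\[ \sum_{i=0}^d h_i(P) x^i = \sum_{i=0}^d f_{i-1}(P) x^i (1-x)^{d-i}, \]
the analogous relation for the $(d-2)$-dimensional boundary poset, and the formula $\tilde\chi(|\Gamma(P)|) = \sum_{i=0}^d (-1)^{i-1} f_{i-1}(P)$, this is a mechanical generating-function manipulation: expanding $(1-x)^{d-i}$ by the binomial theorem produces the factor $\binom{d}{i}$ on the right, and regrouping the boundary contribution in pairs yields the differences $h_i(\partial \Gamma(P)) - h_{i-1}(\partial \Gamma(P)) = g_i(\partial \Gamma(P))$.

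The main obstacle is not mathematical but bibliographic: one must actually verify that no step of Macdonald's specialization secretly uses that distinct faces have distinct vertex sets or that two faces meet in a single common subface. Once this check is made, as the introductory paragraph to the theorem already asserts, the proof is essentially a citation followed by the routine $f$-to-$h$ change of variables.
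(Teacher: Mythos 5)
Your proposal follows essentially the same route as the paper: cite Macdonald's Theorem 2.1, observe that Section~1 of his paper is already phrased for general regular cell complexes and that the only property of simplicial complexes his Section~2 specialization uses is that the interval $[\hat 0, y]$ below each face is a Boolean algebra, conclude the identity holds for simplicial posets, and then translate from $f$-vectors to $h$- and $g$-vectors by the standard change of variables. The paper is equally brief about the final generating-function computation, so the two arguments match in both content and level of detail.
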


In the special case where $|\Gamma(P)|$ is a $(d-1)$-ball this reduces to the equation 
$h_i(P) - h_{d-i}(P) = g_i(\partial \Gamma(P))$.
In particular, for $0 \leq j \leq d$ we have
\begin{equation}{\label{E:Boundary}}
h_j(\partial \Gamma(P)) = \sum_{i=0}^j (h_i(P) - h_{d-i}(P)).
\end{equation}
Note that in the case where $d$ is odd, $\sum_{i=0}^d h_i(P) = h_{(d-1)/2}(\partial \Gamma(P))$, which has the same parity as $\sum_{i=0}^{d-1} h_i (\partial \Gamma(P))$.

With this relationship we can now give a first set of necessary conditions on the $h$-vectors of simplicial poset balls.
As discussed in section \ref{S:Background}, any simplicial poset $P$ such that $|\Gamma(P)|$ is ball is a Cohen-Macaulay poset.
Combining Stanley's characterization of the $h$-vectors of Cohen-Macaulay posets with Theorem \ref{T:Spheres} and Equation (\ref{E:Boundary}) we have the following conditions.

\begin{thm}{\label{T:BasicCond}}
Let $P$ be a $(d-1)$-dimensional simplicial poset such that $|\Gamma(P)|$ is a ball.
Then $h_0 (P) = 1$, $h_d (P) = 0$, $h_i(P) \geq 0$ for all $i$, and $\sum_{i=0}^j (h_i(P) - h_{d-i}(P)) \geq 0$ for $0 \leq j \leq \lfloor (d-1)/2 \rfloor$.
Further, if $d$ is odd and $\sum_{i=0}^d h_i(P)$ is odd then $\sum_{i=0}^j (h_i(P) - h_{d-i}(P)) > 0$ for $0 \leq j \leq (d-1)/2$.
\end{thm}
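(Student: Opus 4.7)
The plan is to assemble the four conditions from facts already established in the excerpt. All of them reduce to two underlying observations: any simplicial poset whose geometric realization is a ball is Cohen-Macaulay (via Munkres' criterion, as noted in section \ref{S:Background}), and the boundary complex $\partial \Gamma(P)$ of a $(d-1)$-ball is a simplicial poset whose geometric realization is a $(d-2)$-sphere, so Theorem \ref{T:Spheres} and Equation (\ref{E:Boundary}) both apply to it.

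First I would dispose of the three pointwise conditions. Since $P$ is Cohen-Macaulay, Stanley's characterization of Cohen-Macaulay $h$-vectors (cited in section \ref{S:Background}) gives $h_0(P) = 1$ and $h_i(P) \geq 0$ for all $i$. The vanishing $h_d(P) = 0$ was already recorded in the introduction: $h_d = (-1)^{d-1}\tilde{\chi}(|\Gamma(P)|)$ and a ball is contractible.

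Next, for the inequality $\sum_{i=0}^j (h_i(P) - h_{d-i}(P)) \geq 0$, I would use Equation (\ref{E:Boundary}) to recognize this sum as $h_j(\partial \Gamma(P))$. Since $\partial \Gamma(P)$ is a simplicial poset sphere, it is Cohen-Macaulay, so Stanley's theorem yields $h_j(\partial \Gamma(P)) \geq 0$. The bound $j \leq \lfloor (d-1)/2 \rfloor$ is precisely the range below (and including) the middle of the length-$d$ symmetric vector $h(\partial \Gamma(P))$; the Dehn-Sommerville symmetry $h_j(\partial \Gamma(P)) = h_{(d-1)-j}(\partial \Gamma(P))$ from Theorem \ref{T:Spheres} shows the inequalities for larger $j$ are redundant.

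Finally, for the strict inequality when $d$ is odd and $\sum_{i=0}^d h_i(P)$ is odd, I would invoke the parity remark following Equation (\ref{E:Boundary}): for $d$ odd, $\sum_{i=0}^d h_i(P)$ and $\sum_{i=0}^{d-1} h_i(\partial \Gamma(P))$ have the same parity. Hence the hypothesis forces $\sum_{i=0}^{d-1} h_i(\partial \Gamma(P))$ to be odd, so the even-sum alternative in Theorem \ref{T:Spheres} is ruled out and the positivity alternative takes over, giving $h_j(\partial \Gamma(P)) > 0$ for every $0 \leq j \leq d-1$. Equation (\ref{E:Boundary}) then translates this into the desired strict inequality for all $0 \leq j \leq (d-1)/2$. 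There is no genuine obstacle in this proof; the only care needed is to match the indexing of $h(\partial \Gamma(P))$, which has length $d$, with the stated range of $j$.
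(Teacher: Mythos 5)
Your proof is correct and matches the approach the paper sketches in the paragraph preceding the theorem: Cohen–Macaulayness of the ball gives $h_0 = 1$, $h_i \geq 0$, the Euler characteristic gives $h_d = 0$, Equation~(\ref{E:Boundary}) identifies $\sum_{i=0}^j(h_i - h_{d-i})$ with $h_j(\partial\Gamma(P))$, and Theorem~\ref{T:Spheres} combined with the parity remark supplies nonnegativity and the strict inequality. You are right to rely only on the parity equivalence rather than the literal equality asserted in the remark after Equation~(\ref{E:Boundary}) — a direct computation shows $h_{(d-1)/2}(\partial\Gamma(P))$ and $\sum_{i=0}^d h_i(P)$ differ by the even quantity $2\sum_{k>(d-1)/2} h_k(P)$, so they agree only modulo $2$, which is all your argument uses.
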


\section{Inequalities Relating a Ball and its Boundary}{\label{S:Monotonicity}}
Consider a simplicial poset ball such that $h_1$ of the boundary sphere is zero.
In the following we derive inequalities relating the $h$-vectors of the boundary sphere and the ball in this case.
The idea of the argument follows that of a similar result by Stanley \cite[Theorem 2.1]{StanleyMonotone} for the $h$-vectors of simplicial complexes.

One of the main tools in this proof is a useful characterization of linear systems of parameters for the ring $A_P$.
Fix an ordering $\{v_1, \ldots, v_n\}$ of the vertices of $P$.
Let $\theta_1, \ldots , \theta_d$ be a collection of homogeneous degree one elements of $A_P$.
We can write each element of our collection as a linear combination of the $x_{v_j}$, $\theta_i = \sum_{j=1}^n \Theta_{i,j} x_{v_j}$.
This gives a $d \times n$ matrix $\Theta_{i,j}$ whose rows correspond to the $\theta_i$.

Let $F$ be a facet of $P$.
Define $\Theta_F$ to be the $d \times d$ submatrix of $\Theta_{i,j}$ obtained by restricting to the columns corresponding to the vertices of $F$.
Then we have the following characterization of which collections of degree one elements are linear systems of parameters.

\begin{lemma}{\label{L:lsop}}
Let $P$ be a simplicial poset and let $\theta_1, \ldots , \theta_d$ be a collection of homogeneous degree one elements of $A_P$.
Then $\theta_1, \ldots , \theta_d$ is a l.s.o.p. for $A_P$ if and only if $\det (\Theta_F) \neq 0$ for all facets $F$ of $P$.
\end{lemma}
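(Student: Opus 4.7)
The plan is to analyze the problem facet by facet via a restriction homomorphism $\pi_F : A_P \to R_F := k[x_v : v \in F]$ defined on generators by $\pi_F(x_p) = \prod_{v \in p} x_v$ if $p \leq F$ and $\pi_F(x_p) = 0$ otherwise. A short case check confirms that $\pi_F$ respects the defining relations: if one of $p, q$ fails to lie below $F$, no common upper bound of $p, q$ can either, and both sides of $x_p x_q = x_{p \wedge q} \sum_r x_r$ vanish; if both lie below $F$, the Boolean interval $[\hat{0}, F]$ forces $p \vee q$ to be the only minimal upper bound below $F$, reducing the relation to the tautology $x_p x_q = x_{p \wedge q} x_{p \vee q}$ in $R_F$. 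The map $\pi_F$ is surjective with $\pi_F(\theta_i) = \sum_{v_j \in F} \Theta_{ij} x_{v_j}$, so in the polynomial ring $R_F$ the images $\pi_F(\theta_1), \ldots, \pi_F(\theta_d)$ form an l.s.o.p.\ precisely when $\det \Theta_F \neq 0$.

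The forward direction follows at once: if $\theta_1, \ldots, \theta_d$ is an l.s.o.p.\ of $A_P$ then $A_P/(\theta)$ is finite dimensional over $k$, and surjectivity of $\pi_F$ passes this finiteness to $R_F/(\pi_F\theta)$, forcing $\det \Theta_F \neq 0$.

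For the converse, assume $\det \Theta_F \neq 0$ for every facet $F$. Here I will invoke Stanley's $k$-basis of $A_P$ from \cite{Stanley91} --- the monomials $x_p \cdot x_{v_1}^{a_1} \cdots x_{v_k}^{a_k}$ indexed by $p \in P$, the vertices $v_1, \ldots, v_k$ of $p$, and nonnegative integers $a_i$ --- to obtain the $k$-module decomposition $A_P = \bigoplus_{p \in P} x_p \cdot k[x_v : v \in p]$, from which $\ker \pi_F = \bigoplus_{p \not\leq F} x_p \cdot k[x_v : v \in p]$. Since every element of $P$ lies below some facet, $\bigcap_F \ker \pi_F = 0$, so $\phi := \prod_F \pi_F : A_P \hookrightarrow B := \prod_F R_F$ is injective. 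By hypothesis each $\pi_F(\theta)$ is an l.s.o.p.\ of the polynomial ring $R_F$, so each $R_F$ is finite over $k[\pi_F(\theta_1), \ldots, \pi_F(\theta_d)]$; identifying this subring with $k[\theta_1, \ldots, \theta_d]$ through $\pi_F$ (an isomorphism of polynomial rings in $d$ variables, since the $\pi_F(\theta_i)$ are algebraically independent linear forms), $B$ is finite over $k[\theta_1, \ldots, \theta_d]$. Since $k[\theta_1, \ldots, \theta_d]$ is Noetherian, the submodule $\phi(A_P) \subseteq B$ is also finitely generated over it, whence $A_P/(\theta)A_P$ is finite dimensional over $k$ and $\theta_1, \ldots, \theta_d$ is an l.s.o.p.

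The main obstacle I anticipate is justifying the $k$-basis decomposition of $A_P$ and the resulting injectivity of $\phi$; once this ingredient from Stanley's paper is in place, the remainder is the Noether-normalization-type observation that a submodule of a finite module over a Noetherian ring is itself finite.
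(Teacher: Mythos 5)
Your proof is correct, but it is genuinely a proof, whereas the paper settles for a citation: the paper attributes the ``only if'' direction to Masuda \cite[Lemma~3.1]{Masuda} and Miller--Reiner \cite[p.~1051]{MillerReiner}, and derives ``if'' from Miller--Reiner's Proposition~5, which exhibits an explicit finite spanning set $\{x_G : G \in P\}$ for $A_P/(\theta)$ assuming only the non-vanishing determinants. Your argument reaches the same conclusions by a different mechanism. You construct, for each facet $F$, the restriction surjection $\pi_F : A_P \to R_F = k[x_v : v \in F]$ and reduce both directions to the elementary fact that $d$ linear forms in $d$ variables form an l.s.o.p.\ of a polynomial ring iff their coefficient matrix is nonsingular. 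The forward direction is immediate from surjectivity of $\pi_F$. The converse is where your route diverges most: rather than producing a spanning set, you use Stanley's $k$-basis decomposition $A_P = \bigoplus_{p\in P} x_p \cdot k[x_v : v\in p]$ to show $\bigcap_F \ker\pi_F = 0$, embed $A_P$ into the finite product $B=\prod_F R_F$, observe that $B$ is a finite module over $T=k[\theta_1,\dots,\theta_d]$ via the compatible $\pi_F$-actions (noting that $\pi_F|_T$ is an isomorphism onto $k[\pi_F(\theta_1),\dots,\pi_F(\theta_d)]$ since these are algebraically independent linear forms), and conclude finiteness of $A_P$ over $T$ from the Noetherian property. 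What the Miller--Reiner argument buys is an explicit, combinatorial description of the generators of the quotient, which is useful elsewhere in their paper; what your argument buys is a conceptually cleaner ``local-to-global'' reduction to the trivial polynomial-ring case, at the cost of invoking Noetherianness and Stanley's basis theorem. Both hinge on the same structural input about $A_P$, just used in different ways, and both require the finiteness of $P$ (hence of its facet set), which is given.
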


The only if part of the lemma was proved by Masuda \cite[Lemma 3.1]{Masuda} and Miller and Reiner \cite[p. 1051]{MillerReiner}.
The if direction follows from Proposition 5 of Miller and Reiner's paper.
In this proposition Miller and Reiner show that for a l.s.o.p. $\theta_1, \ldots , \theta_d$, $A_P/(\theta_1, \ldots, \theta_d)$ is spanned $k$-linearly by the images of the $x_G$ for all elements $G \in P$.
The only property of the l.s.o.p. used in the proof is the non-zero determinant assumption in the above lemma.

Now let $P$ be a $(d-1)$-ball.
Then $\partial (\Gamma (P))$ is a $(d-2)$-sphere.
If $h_1(\partial (\Gamma (P)))=0$ we know that $\partial (\Gamma (P))$ has only $d-1$ vertices.
Therefore every facet of $\partial (\Gamma (P))$ has the same vertex set.
Let $F$ be a facet of $\Gamma (P)$ such that a codimension-one face of $F$ is in $\partial (\Gamma (P))$.
Let $v$ be the vertex of $F$ that is not $\partial (\Gamma(P))$.  Note that all of the vertices of $\Gamma(P)$ not in $F$ are interior vertices.

Let $\theta_1, \ldots, \theta_d$ be a l.s.o.p. for $A_P$.
By Lemma \ref{L:lsop} we know that $\Theta_F$ has non-zero determinant.
Thus the span of $\theta_1, \ldots, \theta_d$ contains some element $\theta' = x_v + \sum_{w \notin F} c_w x_w$ where the sum is over the vertices of $\Gamma(P)$ not in $F$ and the $c_w$ are constants in $k$.
This allows us to choose a new l.s.o.p. $\theta_1', \ldots, \theta_d'$ for $A_P$ such that $\theta_d'$ is a linear combination of interior vertices of $\Gamma (P)$.
By Lemma \ref{L:lsop} we know that $\det (\Theta'_G) \neq 0$ for all facets $G$ of $\Gamma(P)$.

Let $Q$ be the face poset of $\partial \Gamma (P)$.
Let $f : A_P \rightarrow A_Q$ be given by setting all variables corresponding to faces in $\Gamma(P) \backslash \partial \Gamma(P)$ equal to zero.
Identify the l.s.o.p. $\theta_1', \ldots, \theta_d'$ with its image under $f$ in $A_Q$.
Let $H$ be any facet of $\partial \Gamma(P)$. 
The last row of $\Theta'_H$ is all zeros, so the $(d-1) \times (d-1)$ minor given by the first $(d-1)$ rows must have a non-zero determinant.
Again using Lemma \ref{L:lsop} we have that $\theta_1', \ldots, \theta_{d-1}'$ is a linear system of parameters for $\partial \Gamma (P)$.

Therefore, $f$ induces a degree preserving surjection 
\[ f: A_P/(\theta_1', \ldots , \theta_d') \rightarrow A_Q/(\theta_1', \ldots , \theta_{d-1}'). \]
Hence $h_i (P) = F(A_P/(\theta_1', \ldots , \theta_d'),i)  \geq F(A_Q/(\theta_1', \ldots , \theta_{d-1}'),i) = h_i (Q)$.
Summarizing, we have proved the following theorem.

\begin{thm}{\label{T:Monotone}}
Let $P$ be a $(d-1)$-dimensional simplicial poset such that $|\Gamma(P)|$ is a ball and $h_1(\partial (\Gamma (P))) = 0$.
Then $h_i (P) \geq h_i(\partial (\Gamma (P)))$ for all $i \geq 0$.
\end{thm}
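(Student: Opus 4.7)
The plan is to build a degree-preserving surjection from $A_P$ modulo some l.s.o.p.\ onto $A_Q$ modulo a related l.s.o.p., where $Q$ is the face poset of $\partial\Gamma(P)$. Since the Hilbert functions of these quotients compute $h_i(P)$ and $h_i(\partial\Gamma(P))$ respectively, surjectivity will immediately give the desired inequality. This mirrors Stanley's strategy in the corresponding result for simplicial complexes.

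The first step is to exploit the hypothesis $h_1(\partial\Gamma(P)) = 0$, which forces $\partial\Gamma(P)$ to have exactly $d-1$ vertices, all of which lie in every boundary facet. I would pick a facet $F$ of $\Gamma(P)$ whose codimension-one face lies in $\partial\Gamma(P)$ and let $v$ be its unique interior vertex; then every vertex of $\Gamma(P)$ outside $F$ is automatically interior. Starting from any l.s.o.p.\ $\theta_1, \ldots, \theta_d$ for $A_P$, Lemma \ref{L:lsop} gives $\det(\Theta_F) \neq 0$, so the span of the $\theta_i$ contains an element of the form $x_v + \sum_{w \notin F} c_w x_w$. By a linear change of basis (which preserves the ideal they generate), I obtain a new l.s.o.p.\ $\theta_1', \ldots, \theta_d'$ in which $\theta_d'$ has precisely that form; by the previous observation, every term in this sum involves only interior vertices.

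The second step is to push everything to the boundary via the restriction map $f \colon A_P \to A_Q$ that sends each variable corresponding to an interior face to zero. Then $f(\theta_d') = 0$, and to see that $f(\theta_1'), \ldots, f(\theta_{d-1}')$ form a l.s.o.p.\ for $A_Q$, I would observe that for any boundary facet $H$ the matrix $\Theta'_H$ has zero last row, so the nonzero value of $\det(\Theta'_H)$ forces the upper $(d-1) \times (d-1)$ block to be invertible; Lemma \ref{L:lsop} then applies to $A_Q$. Consequently $f$ descends to a degree-preserving surjection $A_P/(\theta_1',\ldots,\theta_d') \to A_Q/(\theta_1',\ldots,\theta_{d-1}')$, and comparing Hilbert functions gives $h_i(P) \geq h_i(\partial\Gamma(P))$.

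The main difficulty is producing a parameter supported entirely on interior vertices; the hypothesis $h_1(\partial\Gamma(P)) = 0$ is essential precisely because it ensures that the $d-1$ boundary vertices all lie in the single facet $F$, so every vertex outside $F$ is interior. Without this, the element $\theta_d'$ produced by the change of basis could still involve boundary vertices and would fail to vanish under $f$, breaking the descent to the boundary quotient.
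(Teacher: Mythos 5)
Your approach matches the paper's in every essential: choose a facet $F$ meeting $\partial\Gamma(P)$ in a codimension-one face, use Lemma~\ref{L:lsop} to re-choose the l.s.o.p.\ so that $\theta_d'$ is supported on interior vertices, and descend through the restriction map to a degree-preserving surjection of Artinian quotients, whose Hilbert functions give the inequality.

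One step, however, is garbled as written. To verify that $f(\theta_1'),\ldots,f(\theta_{d-1}')$ is an l.s.o.p.\ for $A_Q$ you write that ``$\Theta'_H$ has zero last row, so the nonzero value of $\det(\Theta'_H)$ forces the upper $(d-1)\times(d-1)$ block to be invertible.'' But a matrix with a zero row cannot have nonzero determinant, and in any case $\Theta'_H$ is $d\times(d-1)$ (the boundary facet $H$ has only $d-1$ vertices), so $\det(\Theta'_H)$ is not even defined; $H$ is a facet of $\partial\Gamma(P)$, not of $\Gamma(P)$, so Lemma~\ref{L:lsop} does not apply to it directly. The correct input is Lemma~\ref{L:lsop} applied to a facet $G$ of $\Gamma(P)$ containing $H$. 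Since $\partial\Gamma(P)$ has exactly $d-1$ vertices and $H$ uses all of them, the single vertex of $G$ outside $H$ is interior, so the last row of the invertible $d\times d$ matrix $\Theta'_G$ has its only (necessarily nonzero) entry in that column; Laplace expansion along this row shows that the complementary $(d-1)\times(d-1)$ minor --- which is exactly the matrix of $f(\theta_1'),\ldots,f(\theta_{d-1}')$ restricted to the vertices of $H$ --- is nonzero, and then Lemma~\ref{L:lsop} applies to $A_Q$ to finish the verification.
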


\section{Parity conditions on the sum of the $h_i(P)$}{\label{S:Parity}}
In this section we derive a series of different conditions under which the sum of the $h_i(P)$ must be even.
All of these conditions involve some $h_k(\partial \Gamma (P))$ and some $h_j(P)$ being zero.
In the case where $P$ has even dimension (so $d$ is odd) we already know from Theorem \ref{T:BasicCond} that if any $h_k(\partial \Gamma (P))$ is zero then $\sum_{i=0}^d h_i (P)$ is even, so these new conditions are only of interest for odd dimensional posets.

\subsection{Conditions from Counting Arguments}

Our first two examples of this new type of condition follow from counting arguments involving the faces of our complexes.
The main idea in both proofs is the following connection between a zero in the $h$-vector of the boundary of the complex and a parity condition on the incidences between facets and codimension-one faces.

\begin{lemma}{\label{L:EvenIncidence}}
Let $P$ be a $(d-1)$-dimensional simplicial poset such that $|\Gamma(P)|$ is a ball and $h_k(\partial \Gamma (P)) =0$ for some $k$ strictly between zero and $d-1$.
Then every set of $d-1$ vertices of $P$ is contained in an even number of facets (possibly zero).
\end{lemma}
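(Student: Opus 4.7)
The plan is to reduce the statement to Theorem \ref{T:SphereZero} applied to the boundary $(d-2)$-sphere $\partial \Gamma(P)$, via a double count of incidences between facets of $\Gamma(P)$ and their codimension-one subfaces.

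First I would fix a set $V$ of $d-1$ vertices of $P$ and let $a(V)$ denote the number of facets $F$ of $\Gamma(P)$ whose vertex set contains $V$; this is what we want to show is even. Since $[\hat{0}, F]$ is a Boolean algebra of rank $d$, within each facet $F$ there is, for every $(d-1)$-subset $W$ of the vertex set of $F$, a unique codimension-one face $\tau$ of $F$ with vertex set $W$. In particular, each facet $F$ containing $V$ contributes exactly one pair $(F, \tau)$ with $\tau$ a codimension-one face of $F$ and $\tau$ having vertex set $V$, so $a(V)$ equals the total number of such pairs.

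Next I would recount the pairs by summing over $\tau$. Because $|\Gamma(P)|$ is a $(d-1)$-ball and $\Gamma(P)$ is a regular CW complex, each codimension-one face $\tau$ of $P$ is covered by exactly two facets when $\tau$ lies in the interior and by exactly one facet when $\tau$ lies in $\partial\Gamma(P)$. Writing $t_\partial(V)$ and $t_{\mathrm{int}}(V)$ for the numbers of boundary and interior codimension-one faces of $P$ with vertex set $V$, the recount gives
\[ a(V) = t_\partial(V) + 2\, t_{\mathrm{int}}(V), \]
so $a(V) \equiv t_\partial(V) \pmod{2}$.

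To finish: if $V$ contains an interior vertex of $\Gamma(P)$ then $t_\partial(V) = 0$. Otherwise every vertex in $V$ lies in $\partial\Gamma(P)$, whose facets have exactly $d-1$ vertices, so $t_\partial(V)$ equals the number of facets of $\partial\Gamma(P)$ with vertex set $V$. By hypothesis $h_k(\partial\Gamma(P))=0$ for some $k$ strictly between $0$ and $d-1$, so Theorem \ref{T:SphereZero} applied to the $(d-2)$-sphere $\partial\Gamma(P)$ shows that this count is even. The step demanding the most care is the ``one-or-two facets'' dichotomy for codimension-one faces of a simplicial poset ball; this is the standard manifold-with-boundary fact applied to the regular CW complex $\Gamma(P)$, and it is precisely what allows the parity of $a(V)$ to be controlled by the boundary sphere.
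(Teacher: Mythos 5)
Your proof is correct and takes essentially the same route as the paper: both count incidences between facets and their codimension-one faces with the given vertex set, use the manifold-with-boundary dichotomy (interior codimension-one faces in two facets, boundary ones in one), and invoke Theorem~\ref{T:SphereZero} on the boundary $(d-2)$-sphere to get that the number of boundary faces on that vertex set is even. Your double-count identity $a(V)=t_\partial(V)+2t_{\mathrm{int}}(V)$ just makes explicit the bookkeeping that the paper states informally (including the observation that no single facet has two faces with the same vertex set).
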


\begin{proof}
Let $S$ be a set of $d-1$ vertices of $\Gamma(P)$.
If $S$ is contained in no facets of $\Gamma(P)$ we are done.
Otherwise, let $F$ be a face of $\Gamma (P)$ with vertex set $S$.
If $F$ is an interior face of $\Gamma(P)$ then since $|\Gamma(P)|$ is a manifold there must be exactly two facets of $\Gamma(P)$ that have $F$ as a codimension-one face.
If $F$ is a boundary face of $\Gamma(P)$ then there is exactly one facet of $\Gamma(P)$ that has $F$ as a codimension-one face.
Further, since some $h_k(\partial \Gamma (P)) =0$, by Theorem \ref{T:SphereZero} the number of boundary faces of $\Gamma(P)$ with vertex set $S$ is even.
Since no single facet of $\Gamma(P)$ can have multiple faces with the same vertex set, the total number of facets of $\Gamma (P)$ that contain $S$ is even.
\end{proof}

Now consider the case where $|\Gamma(P)|$ is a ball, $h_1 (P) = 0$, and $h_k(\partial \Gamma (P)) =0$ for some $k$ strictly between zero and $d-1$.
By Lemma \ref{L:EvenIncidence}, any set of $d-1$ vertices of $P$ is contained in an even number of facets.
In terms of the face numbers, $h_1(P)=0$ implies $f_0(P) = d$, meaning that all of the vertices of $\Gamma (P)$ are in every facet.
Therefore, every set of $d-1$ vertices of $P$ is in every facet.
Hence the total number of facets of $\Gamma(P)$ is even and we have the following proposition.

\begin{prop}{\label{T:h1=0}}
Let $P$ be a $(d-1)$-dimensional simplicial poset such that $|\Gamma(P)|$ is a ball, $h_1 (P) = 0$, and $h_k(\partial \Gamma (P)) =0$ for some $k$ strictly between zero and $d-1$.
Then $\sum_{i=0}^d h_i(P)$ is even.
\end{prop}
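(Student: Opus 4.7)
The plan is to show that $\sum_{i=0}^d h_i(P)$ equals the number of facets of $\Gamma(P)$ and then invoke Lemma \ref{L:EvenIncidence} to conclude that this count is even.

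First, I would plug $x = 1$ into the defining identity
\[
\sum_{i=0}^d h_i x^i = \sum_{i=0}^d f_{i-1} x^i (1-x)^{d-i}.
\]
Every summand on the right-hand side with $i < d$ vanishes due to the factor $(1-x)^{d-i}$, leaving $\sum_{i=0}^d h_i(P) = f_{d-1}(P)$. So it suffices to show $f_{d-1}(P)$ is even.

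Next, I would extract from the hypothesis $h_1(P) = 0$ the fact that $\Gamma(P)$ has exactly $d$ vertices. Comparing coefficients of $x^1$ in the same identity gives $h_1 = f_0 - d$, so $h_1(P) = 0$ forces $f_0(P) = d$. Since $P$ is $(d-1)$-dimensional, each facet of $\Gamma(P)$ is a simplex on $d$ distinct vertices; because $\Gamma(P)$ only has $d$ vertices in total, every facet of $\Gamma(P)$ must use all of them.

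Finally, I would pick any set $S$ of $d-1$ vertices of $\Gamma(P)$. By the previous step, $S$ is contained in every facet of $\Gamma(P)$, so the number of facets containing $S$ equals $f_{d-1}(P)$. The assumption that $h_k(\partial \Gamma(P)) = 0$ for some $0 < k < d-1$ is precisely the hypothesis needed to apply Lemma \ref{L:EvenIncidence}, which tells us that $S$ is contained in an even number of facets. Hence $f_{d-1}(P) = \sum_{i=0}^d h_i(P)$ is even. I do not anticipate any real obstacle: once Lemma \ref{L:EvenIncidence} is in hand, the argument is essentially a translation of the $h_1(P) = 0$ hypothesis into a statement about the vertex set of $\Gamma(P)$.
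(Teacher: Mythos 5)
Your proof is correct and follows essentially the same route as the paper's: invoke Lemma~\ref{L:EvenIncidence} to get parity of the number of facets containing any fixed $(d-1)$-subset of vertices, observe that $h_1(P)=0$ forces $f_0(P)=d$ so every facet has the same vertex set, and conclude the total facet count $f_{d-1}(P)=\sum_{i=0}^d h_i(P)$ is even. The only difference is that you spell out the identities $\sum h_i = f_{d-1}$ and $h_1 = f_0 - d$ explicitly, which the paper leaves implicit.
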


We can extend the result of Proposition \ref{T:h1=0} to the case $h_2(P)=0$ (instead of $h_1(P)=0$) using a somewhat more involved argument based on the same ideas.

\begin{prop}{\label{T:h2=0}}
Let $P$ be a $(d-1)$-dimensional simplicial poset such that $|\Gamma(P)|$ is a ball, $h_2 (P) = 0$, and $h_k(\partial \Gamma (P)) =0$ for some $k$ strictly between zero and $d-1$.
Then $\sum_{i=0}^d h_i(P)$ is even.
\end{prop}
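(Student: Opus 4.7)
As with Proposition~\ref{T:h1=0}, Lemma~\ref{L:EvenIncidence} is the main combinatorial engine and the proof proceeds by double-counting facet incidences. The case of odd $d$ is immediate from Theorem~\ref{T:BasicCond}, since any vanishing $h_k(\partial\Gamma(P))$ already forces $\sum h_i(P)$ to be even in that case; so assume throughout that $d$ is even.

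The first step is to refine Lemma~\ref{L:EvenIncidence} to single vertices. Fix a vertex $v$ and let $f_v$ denote the number of facets of $\Gamma(P)$ containing $v$. Counting the pairs $(F,S)$ in which $F$ is a facet containing $v$ and $S \subseteq V(F)$ is a $(d-1)$-subset with $v \in S$, each facet through $v$ contributes exactly $d-1$ such subsets, so the total count equals $(d-1) f_v$. On the other hand, this total is $\sum_{|S|=d-1,\, v\in S} c_S$, where $c_S$ denotes the number of facets containing $S$; by Lemma~\ref{L:EvenIncidence} every $c_S$ is even, so $(d-1) f_v$ is even. Since $d-1$ is odd (as $d$ is even), $f_v$ must be even for every vertex $v$.

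The second step is to use the hypothesis $h_2(P)=0$, which by definition of the $h$-vector is equivalent to the face-number identity $f_1(P) = (d-1) f_0(P) - \binom{d}{2}$. Combining this identity with the just-obtained parity of $f_v$ and with the bookkeeping identities
\[
 \sum_{v} f_v \;=\; d\cdot f_{d-1}(P), \qquad \sum_{\{u,v\}} f_{\{u,v\}} \;=\; \binom{d}{2}\,f_{d-1}(P),
\]
where $f_{\{u,v\}}$ counts the facets whose vertex set contains both $u$ and $v$, the plan is to extract $f_{d-1}(P) \equiv 0 \pmod{2}$ via a further double-count layering Lemma~\ref{L:EvenIncidence} at the level of vertex pairs against the edge-count constraint coming from $h_2(P)=0$. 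Since $f_{d-1}(P) = \sum_{i=0}^d h_i(P)$, this yields the desired parity statement.

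The principal obstacle is precisely this final step. In Proposition~\ref{T:h1=0} the hypothesis $h_1(P)=0$ forces $f_0(P)=d$, so every facet shares the same vertex set and $f_{d-1}(P)$ is directly identified with a single $c_S$, which is even by Lemma~\ref{L:EvenIncidence}. With $h_2(P)=0$ the facets of $\Gamma(P)$ may have many distinct vertex sets, and the parity of $f_{d-1}(P)$ must instead be extracted by threading the $h_2=0$ identity through multiple incidence counts—a more delicate combinatorial maneuver than in the $h_1=0$ case, and the reason the assumption cannot be weakened.
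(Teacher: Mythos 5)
There is a genuine gap at exactly the point you flag yourself: the final step extracting $f_{d-1}(P) \equiv 0 \pmod 2$ is never carried out, only described as a ``plan'' and a ``delicate combinatorial maneuver.'' Your preliminary observations are correct but do not bridge the distance. Knowing that every $f_v$ is even gives no leverage: since $d$ is even in the case you are considering, the identity $\sum_v f_v = d\, f_{d-1}(P)$ forces the left side to be even automatically, independently of the parity of $f_{d-1}(P)$. Similarly, the numerical identity $f_1 = (d-1)f_0 - \binom{d}{2}$ is a global count of edges, while Lemma~\ref{L:EvenIncidence} is a local statement about individual $(d-1)$-sets; there is no stated mechanism by which ``threading one through the other'' isolates the parity of the facet count, and it is not obvious that any purely numerical double count can.

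What the paper actually does is use $h_2(P)=0$ structurally rather than numerically. It fixes a starting facet $F_0$ on vertex set $V$ and introduces the remaining vertices one at a time, adding at stage $i$ a facet $F_i$ meeting the previous subcomplex $\Delta_{i-1}$ in a $(d-2)$-face and carrying one new vertex $w_i$. Each such stage contributes at least $d-1$ new edges, and the initial stage contributes at least $\binom{d}{2}$. Because $h_2(P)=0$ means the complex has exactly $\binom{d}{2} + h_1(P)(d-1)$ edges, equality must hold at every stage, which forces every facet of $\Delta_i$ containing $w_i$ to have the \emph{same} vertex set as $F_i$. That rigidity is what lets Lemma~\ref{L:EvenIncidence} be applied: choosing $S$ to be a $(d-1)$-subset of $V(F_{h_1})$ containing $w_{h_1}$, the facets through $S$ are precisely those with vertex set $V(F_{h_1})$, so their number is even; peeling these off and repeating with $F_{h_1-1}$, and so on, shows every vertex set carries an even number of facets. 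Your double count captures none of this structural consequence of $h_2=0$, and without it you do not get a handle on $f_{d-1}(P)$ itself.

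One more note: the reduction to $d$ even via Theorem~\ref{T:BasicCond} is correct but unnecessary in the paper's argument, which works uniformly in $d$.
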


\begin{proof}
Pick a facet $F_0$ of $\Gamma (P)$ with vertex set $V = \{v_1, \ldots , v_d \}$.
Let $\Delta_0$ be the induced subcomplex on the vertex set $V$; $\Delta_0$ consists of all faces of $\Gamma (P)$ who vertices are contained in the set $V$.
Note that $\Delta_0$ contains at least $\binom{d}{2}$ edges.

Let $F_1$ be a facet of $\Gamma (P) - \Delta_0$ that intersects $\Delta_0$ in a face of dimension $d-2$.  
Since $|\Gamma (P)|$ is a manifold, unless $\Gamma (P) = \Delta_0$ such a facet $F_1$ must exist.
Let $w_1$ be the vertex of $F_1$ not in $V$ and let $\Delta_1$ be the induced subcomplex of $\Gamma(P)$ on the vertex set $V \cup \{w_1\}$.
Note that there must be at least $d-1$ edges in $\Delta_1 - \Delta_0$ in order for the facet $F_1$ to exist.

We can continue to build our complex in this manner until we reach $\Delta_{h_1} = \Gamma(P)$.
This results in a minimum of $\binom{d}{2} + h_1(P) \cdot (d-1)$ edges in our complex.
However, since $h_2(P) = 0$ this is exactly the number of edges in $\Gamma(P)$.
So we must have added the minimum number of edges at each step in our construction.
In particular, for $1 \leq i \leq h_1$, all of the facets of $\Delta_i$ that contain $w_i$ must have the same vertex set as $F_i$.

By Lemma \ref{L:EvenIncidence} we know that every set of $d-1$ vertices of $\Gamma (P)$ is contained in an even number of facets.
In particular, let $S$ be a set of $d-1$ vertices of $F_{h_1}$ that includes the vertex $w_{h_1}$.
The facets that contain the vertices of $S$ are exactly those facets whose vertex set equals the vertex set of $F_{h_1}$.
Therefore, there must be an even number of facets on the vertex set of $F_{h_1}$.

Since we are only interested in the parity of the number of facets on each vertex set we can now ignore the contribution of the facets on the vertex set of $F_{h_1}$ and repeat the above argument on the complex $\Delta_{h_1-1}$ and the facet $F_{h_1-1}$.
Continuing in this manner we see that there are an even number of facets on all of the sets of $d$ vertices of $\Gamma (P)$.
Therefore $\Gamma(P)$ has an even number of facets, as desired.
\end{proof}

\subsection{The Cone Over the Boundary of $\Gamma(P)$}

Let $P$ be a simplicial poset such that $|\Gamma(P)|$ is a manifold with boundary and let $Q$ be the face poset of $\partial(\Gamma(P))$.
Define the cone over the boundary of $\Gamma (P)$ to be $SP := P \cup (Q \times [1,2])$ with each element $(p,1) \in (Q \times [1,2])$ identified with the element $p \in P$.
The covering relations in $SP$ are all of the covering relations in $P$ along with all of the covering relations in $Q \times [1,2]$.
In the case where $|\Gamma (P)|$ is a $(d-1)$-ball, $|\Gamma(SP)|$ is a $(d-1)$-sphere.

The face numbers of the new complex $\Gamma(SP)$ are given by
\[ f_i(\Gamma(SP)) = f_i (\Gamma(P)) + f_{i-1} (\partial \Gamma (P)) \]
for $-1 \leq i \leq d-1$, where $f_{-2}(\partial \Gamma(P))$ is interpreted as zero.
A straightforward calculation then shows that the elements of the $h$-vector of $\Gamma(SP)$ are given by
\begin{equation}{\label{E:SPh}}
h_i(\Gamma(SP)) 
= h_i (\Gamma(P)) + h_{i-1} (\partial \Gamma (P))
= h_i (\Gamma(P)) + \sum_{j=0}^{i-1} (h_j(\Gamma (P)) - h_{d-j}(\Gamma(P))),
\end{equation}
with the last equality by equation (\ref{E:Boundary}).

We now consider the relationship between the algebras $A_P$ and $A_{SP}$.
Let $v$ be the cone point of $SP$; $v$ is the vertex corresponding to $(\hat{0},2)$ in $Q \times [1,2]$.
There is a natural map $f:A_{SP} \rightarrow A_P$ given by setting all variables corresponding to faces containing $v$ equal to zero.
If $\Theta = \theta_1, \ldots, \theta_d$ is a l.s.o.p. for $A_{SP}$, then by Lemma \ref{L:lsop} the image of $\Theta$ under $f$ (which we also write as $\Theta)$ is a l.s.o.p. for $A_P$.
Therefore, there is an induced map $f: A_{SP}/ \Theta \rightarrow A_P / \Theta$ with kernel generated (modulo $\Theta$) by monomials containing a variable corresponding to a face containing $v$.
We use this map $f$ to prove the following lemma.

\begin{lemma}{\label{T:Interior}}
 Let $P$ be a $(d-1)$-dimensional simplicial poset such that $|\Gamma(P)|$ is a ball and $h_k(P)=0$ for some $k$ strictly between zero and $d$.
Let $F$ be a facet of $\Gamma(P)$ with an interior vertex.
Then $\Gamma(P)$ has an even number of facets with vertex set equal to that of $F$.
\end{lemma}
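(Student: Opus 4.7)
The plan is to apply Proposition \ref{T:SphereZeroAlg} to the sphere $SP$, using the map $f : A_{SP} \to A_P$ developed just before the lemma. Let $V = \{v_1, \ldots, v_d\}$ be the vertex set of $F$ and let $v'$ be the interior vertex. Since $SP$ is Cohen-Macaulay, pick any l.s.o.p.\ $\tilde\Theta = \tilde\theta_1, \ldots, \tilde\theta_d$ for $A_{SP}$ and write $\Theta$ for its image under $f$, which is a l.s.o.p.\ for $A_P$. It suffices to show that $x_{v_1} \cdots x_{v_d} = 0$ in $A_{SP}/\tilde\Theta$; then Proposition \ref{T:SphereZeroAlg} gives that $\Gamma(SP)$ has an even number of facets on $V$, and the proof will conclude by observing that every facet of $\Gamma(SP)$ coming from the cone part has its vertex set inside $\partial\Gamma(P) \cup \{v\}$ and so cannot contain the interior vertex $v'$. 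Thus the facets of $\Gamma(SP)$ on vertex set $V$ coincide with the facets of $\Gamma(P)$ on $V$.

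The key local fact I will use is the following: the only faces of $SP$ containing the cone point $v$ are of the form $(q,2)$ for $q \in Q$, whose vertices lie in $\partial\Gamma(P) \cup \{v\}$. Since $v'$ is interior, $v'$ is not a vertex of any such face, so $v'$ and $w$ have no common upper bound in $SP$ whenever $w \ni v$. Consequently, $x_{v'} \cdot x_w = 0$ in $A_{SP}$ for every face $w$ containing $v$.

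To leverage the hypothesis $h_k(P) = 0$, I would reorder $v_1, \ldots, v_d$ so that $v' \in \{v_{k+1}, \ldots, v_d\}$; this is possible because $k < d$ leaves at least one slot in the second batch. Since $F(A_P/\Theta, k) = h_k(P) = 0$, the monomial $x_{v_1} \cdots x_{v_k}$ is zero in $A_P/\Theta$, so we may lift a witnessing relation through the surjection $f$ to obtain $x_{v_1} \cdots x_{v_k} \equiv r \pmod{\tilde\Theta}$ in $A_{SP}$ for some $r$ lying in the kernel of $f$. By the description of this kernel given just before the lemma statement, the class of $r$ in $A_{SP}/\tilde\Theta$ is a sum of monomials each containing at least one variable $x_w$ with $w \ni v$. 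Multiplying by $x_{v_{k+1}} \cdots x_{v_d}$, which contains $x_{v'}$ as a factor, each such monomial acquires a factor $x_{v'} x_w$ and therefore vanishes in $A_{SP}$. Hence $x_{v_1} \cdots x_{v_d} = 0$ in $A_{SP}/\tilde\Theta$, as desired.

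The main obstacle is the bookkeeping in the middle step: carefully combining the three ingredients (vanishing of degree $k$ in $A_P/\Theta$, the explicit description of $\ker\bigl(A_{SP}/\tilde\Theta \to A_P/\Theta\bigr)$, and the annihilation identity $x_{v'} x_w = 0$) so that the factor $x_{v'}$ is reserved for the second batch and actually meets the troublesome $x_w$. Once the reordering is set up, the rest is a short algebraic manipulation followed by the cited facet-counting proposition.
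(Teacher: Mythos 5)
Your proposal matches the paper's proof in both strategy and mechanics: both apply Proposition \ref{T:SphereZeroAlg} to the sphere $SP$, use $h_k(P)=0$ to place $x_{v_1}\cdots x_{v_k}$ in the kernel of the map $A_{SP}/\Theta \to A_P/\Theta$, observe that each kernel term carries a variable indexed by a face through the cone point and is therefore annihilated by $x_{v'}$ (since the interior vertex $v'$ shares no upper bound with any such face), and finish by noting that facets of $SP$ on $V$ coincide with facets of $P$ on $V$. The paper simply labels the interior vertex $v_d$ from the start rather than reordering afterward; otherwise the arguments are the same.
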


\begin{proof}
Let $V = \{v_1, \ldots , v_{d-1}, v_d\}$ be the vertex set of $F$, where $v_d$ is an interior vertex of $\Gamma(P)$.
Let $m$ be the monomial in $(A_{SP})_k$ defined by $m = x_{v_1} \cdots x_{v_k}$.
Since the dimension of $(A_P/ \Theta)_k$ is $h_k(P) = 0$ we know that $m$ is in the kernel of the map $f : A_{SP}/ \Theta \rightarrow A_P/ \Theta$ defined above.
Therefore, in $A_{SP}/ \Theta$ we can write $m$ as a linear combination of monomials each containing a variable corresponding to a face containing $v$.
Since $v_d$ is an interior vertex of $\Gamma(P)$, $x_{v_d} m$ is zero in $A_{SP}/ \Theta$.
Thus by Proposition \ref{T:SphereZeroAlg} we know that there must be an even number of facets of $\Gamma(SP)$ with vertex set $V$.
Since the cone point $v$ is not in $V$, the facets of $\Gamma(SP)$ with vertex set $V$ are exactly the same as the facets of $\Gamma(P)$ with vertex set $V$, proving the desired result.
\end{proof}

\subsection{The Case $h_1 (\partial \Gamma (P)) = 0$}
Using the algebraic framework developed in the previous section we can now prove the following necessary condition.

\begin{prop}{\label{T:bh1=0}}
Let $P$ be a $(d-1)$-dimensional simplicial poset such that $|\Gamma(P)|$ is a ball, $h_1(\partial \Gamma (P)) =0$, and $h_k (P) = 0$ for some $0 < k < d$.
Then $\sum_{i=0}^d h_i(P)$ is even.
\end{prop}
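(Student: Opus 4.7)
My plan is to show that $f_{d-1}(P)$, the total number of facets of $\Gamma(P)$, is even; this suffices because setting $x = 1$ in the defining relation of the $h$-vector gives $\sum_{i=0}^d h_i(P) = f_{d-1}(P)$. The strategy is to partition the facets of $\Gamma(P)$ into equivalence classes by vertex set and to apply Lemma \ref{T:Interior} to each class.

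The key combinatorial input is that the hypothesis $h_1(\partial \Gamma(P)) = 0$ forces the boundary sphere to have exactly $d-1$ vertices. Indeed, $\partial \Gamma(P)$ is $(d-2)$-dimensional, so $h_1(\partial \Gamma(P)) = f_0(\partial \Gamma(P)) - (d-1)$, and this equals $0$ precisely when $\partial \Gamma(P)$ has $d-1$ vertices. Call this set of boundary vertices $V_b$, so $|V_b| = d-1$.

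Now let $F$ be any facet of $\Gamma(P)$. Since $F$ corresponds to a maximal element $p \in P$ with $[\hat{0}, p]$ a Boolean algebra of rank $d$, the facet $F$ has exactly $d$ distinct vertices. If all of these $d$ vertices were boundary vertices, they would form a $d$-element subset of the $(d-1)$-element set $V_b$, which is impossible. Hence every facet of $\Gamma(P)$ contains at least one interior vertex. Applying Lemma \ref{T:Interior} (which uses the hypothesis $h_k(P) = 0$ for some $0 < k < d$) to each facet, we conclude that for every vertex set $W$ that arises as the vertex set of some facet of $\Gamma(P)$, the number of facets of $\Gamma(P)$ with vertex set $W$ is even.

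Summing over the equivalence classes of facets grouped by vertex set, $f_{d-1}(P)$ is a sum of even integers and is therefore even, which gives $\sum_{i=0}^d h_i(P)$ even as desired. The entire argument is quite short given the earlier preparation; the only step requiring care is the observation that the hypothesis $h_1(\partial \Gamma(P)) = 0$ forces every facet to have an interior vertex, which is the pigeonhole argument above and which is the hook that lets Lemma \ref{T:Interior} cover every facet simultaneously.
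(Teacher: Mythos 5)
Your proof is correct and follows essentially the same approach as the paper's: the hypothesis $h_1(\partial \Gamma(P)) = 0$ gives exactly $d-1$ boundary vertices, so every facet (having $d$ distinct vertices) contains an interior vertex, and Lemma \ref{T:Interior} applied to each facet yields that facets come in even-sized groups by vertex set. The only difference is a matter of exposition; you spell out the pigeonhole observation and the grouping-by-vertex-set step more explicitly than the paper does.
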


\begin{proof}
Since $h_1(\partial \Gamma (P)) =0$ we know that $\partial \Gamma (P)$ has only $d-1$ vertices.
Therefore every facet of $\Gamma(P)$ contains an interior vertex.
By Lemma \ref{T:Interior} there are an even number of facets (possibly zero) on every set of $d$ vertices of $\Gamma(P)$.
Hence $\sum_{i=0}^d h_i(P)$, which is the total number of facets of $\Gamma(P)$, is even.
\end{proof}

\subsection{The Case $h_1 (\partial \Gamma (P)) = 1$}
A slightly more involved argument allows us to extend the result of Proposition \ref{T:bh1=0} to the case where $h_1 (\partial \Gamma (P)) = 1$ and some higher $h_j (\partial (\Gamma (P))$ is zero.

\begin{prop}{\label{T:bh1=1}}
Let $P$ be a $(d-1)$-dimensional simplicial poset such that $|\Gamma(P)|$ is a ball, $h_1(\partial \Gamma (P)) = 1$, $h_j(\partial \Gamma (P)) = 0$ for some $1<j<d-1$, and $h_k (P) = 0$ for some $1 \leq k < d$.
Then $\sum_{i=0}^d h_i(P)$ is even.
\end{prop}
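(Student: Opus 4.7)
The plan is to refine the facet-counting argument used in the proof of Proposition~\ref{T:bh1=0}, invoking both of the lemmas already established in Section~\ref{S:Parity}. Since $h_1(\partial\Gamma(P)) = 1$, the boundary sphere carries exactly $d$ vertices, which I will call $V_\partial = \{v_1, \ldots, v_d\}$. Every facet of $\Gamma(P)$ then falls into one of two classes: \emph{type A}, whose vertex set equals $V_\partial$ (with total count $A$), and \emph{type B}, which contain at least one interior vertex (with total count $B$). Because $\sum_{i=0}^d h_i(P)$ equals the total number of facets $A + B$, it suffices to show that both $A$ and $B$ are even.

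The even-ness of $B$ is immediate from Lemma~\ref{T:Interior}. Grouping type~B facets by vertex set, each such vertex set contains an interior vertex, and the hypothesis $h_k(P) = 0$ makes Lemma~\ref{T:Interior} applicable, producing an even count of facets on each vertex set. Summing over all such vertex sets shows $B$ is even.

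To handle $A$, I would fix an index $i$ and consider the $(d-1)$-subset $V' = V_\partial \setminus \{v_i\}$. Every facet of $\Gamma(P)$ containing $V'$ has exactly one additional vertex, which is either $v_i$ (producing a type~A facet, contributing $A$ possibilities in total) or some interior vertex $w$ (producing a type~B facet with vertex set $V' \cup \{w\}$). Lemma~\ref{L:EvenIncidence} applies because $h_j(\partial\Gamma(P)) = 0$ for some $0 < j < d-1$, and tells me that the total number of facets containing $V'$ is even. On the other side, for each interior $w$ the vertex set $V' \cup \{w\}$ contains an interior vertex, so Lemma~\ref{T:Interior} forces the number of facets on that vertex set to be even; summing over $w$ shows that the total type-B contribution above $V'$ is even. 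Subtracting, $A$ itself must be even, and together with $B$ even this gives the desired parity.

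The only real obstacle is conceptual rather than technical: one must recognize that the assumption $h_1(\partial\Gamma(P)) = 1$ isolates precisely the type~A facets as the single class not automatically covered by Lemma~\ref{T:Interior}, and that the additional hypothesis $h_j(\partial\Gamma(P)) = 0$ provides exactly the codimension-one incidence parity from Lemma~\ref{L:EvenIncidence} needed to pin down $A \bmod 2$ by restricting to $V_\partial \setminus \{v_i\}$. No further algebraic input beyond the two lemmas developed earlier in this section is required.
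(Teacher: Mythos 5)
Your proof is correct and follows essentially the same approach as the paper: partition facets into those on the boundary vertex set and those containing an interior vertex, handle the latter via Lemma~\ref{T:Interior}, and use the even-incidence property of $(d-1)$-subsets (which you correctly invoke from Lemma~\ref{L:EvenIncidence}, while the paper re-derives it in place via Theorem~\ref{T:SphereZero}) to pin down the parity of the former. The only cosmetic difference is that you state the evenness of $B$ separately rather than obtaining $A$ by subtraction from a total, but the underlying argument is identical.
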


\begin{proof}
The assumption $h_1(\partial \Gamma (P)) =1$ implies that $\partial \Gamma (P)$ has $d$ vertices.
Let $W = \{w_1, \ldots , w_d\}$ be the set of exterior vertices of $\Gamma (P)$.
Let $V$ be a set of $d$ vertices of $\Gamma(P)$.
If $V \neq W$ then $V$ contains some interior vertex, so by Lemma \ref{T:Interior} we know that there are an even number of facets of $\Gamma(P)$ with vertex set $V$.
In particular, given any set $S$ of $(d-1)$ vertices of $\Gamma(P)$ there are an even number (possibly zero) of facets with vertex set $V$ that contain $S$.

If there are no facets with vertex set $W$ then we are done, so assume $F$ is a facet with vertex set $W$.
Let $W'$ be a set of $d-1$ distinct elements of $W$.
Since $h_j(\partial \Gamma (P)) = 0$ we know by Theorem \ref{T:SphereZero} that the number of boundary faces of $\Gamma(P)$ with vertex set $W'$ is even.
Because $|\Gamma(P)|$ is a manifold each interior face with vertex set $W'$ is contained in two facets of $\Gamma(P)$.
Therefore there are an even number of facets of $\Gamma(P)$ that contain the vertices $W'$.
As argued above there are an even number of facets on vertex sets other than $W$ that contain $W'$, so there must be an even number of facets on vertex set $W$.
Therefore we have an even total number of facets, which gives the desired result.
\end{proof}

\section{Constructions}{\label{S:Constructions}}

We now turn our attention to constructing posets $P$ with prescribed $h$-vectors such that $|\Gamma(P)|$ is a ball.
The balls that we construct are all shellable.
We use the following result of Bj{\"o}rner \cite[Proposition 4.3]{Bjorner84} to prove that the complexes that we construct are actually balls.

\begin{prop}{\label{T:ShellBall}}
Let $\Gamma (P)$ be a shellable CW-complex of dimension $d-1$.
If every $(d-2)$-cell is a face of at most two $(d-1)$-cells and some $(d-1)$-cell is a face of only one $d$-cell then $|\Gamma (P)|$ is homeomorphic to the ball $E^d$.
\end{prop}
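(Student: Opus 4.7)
The plan is to prove the proposition by induction on the length $t$ of the shelling $F_1, F_2, \ldots, F_t$, showing that at each stage the partial complex $\Gamma_j := \bigcup_{i=1}^{j} F_i$ is homeomorphic to a $(d-1)$-ball, and then invoking the cone-like structure at the final facet or at the distinguished facet free on a codimension-one face. The base case $\Gamma_1 = F_1$ is a closed simplex, hence a $(d-1)$-ball. For the inductive step, I would combine the shelling axiom, which says $F_j \cap \Gamma_{j-1}$ is a union of facets of $\partial F_j$, with the standard gluing principle: if $B_1$ is a $(d-1)$-ball, $B_2$ is a $(d-1)$-simplex, and $B_1 \cap B_2$ is a $(d-2)$-ball contained in $\partial B_2$, then $B_1 \cup B_2$ is again a $(d-1)$-ball.

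The main content is therefore to show that at every step the intersection $F_j \cap \Gamma_{j-1}$ is a \emph{proper} union of facets of $\partial F_j$ (equivalently, a $(d-2)$-ball inside the $(d-2)$-sphere $\partial F_j$), never the entire $\partial F_j$. This is where the hypothesis on $(d-2)$-faces enters: if $F_j \cap \Gamma_{j-1}$ equaled all of $\partial F_j$, then every codimension-one face $G$ of $F_j$ would already be a face of some $F_i$ with $i<j$, and would furthermore be a face of a second facet $F_{i'}$ unless $G$ happens to be a ``free" face of that $F_i$; iterating, the hypothesis that each $(d-2)$-cell is a face of at most two $(d-1)$-cells plus the existence of a $(d-2)$-cell that is a face of only one $(d-1)$-cell (reading the statement as I believe it is intended) forces at least one facet of $\partial F_j$ to be missing from $\Gamma_{j-1}$. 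I would formalize this by tracking the free codimension-one faces throughout the shelling and showing that the hypothesis guarantees the set of free faces is non-empty at every intermediate stage; this can be done by a direct counting argument on incidences between $(d-2)$- and $(d-1)$-cells, using that $|\Gamma(P)|$ has at least one codimension-one face lying in only one facet throughout.

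Granted that $F_j \cap \Gamma_{j-1}$ is a proper subcomplex of $\partial F_j$ consisting of a union of facets of the $(d-2)$-sphere $\partial F_j$, I would then invoke the classical fact that such a union, provided it is non-empty and a proper subset of $\partial F_j$, is itself a shellable $(d-2)$-ball (the facets of $\partial F_j$ listed in any order form a shelling of the portion that appears, with the complementary facets left out). Combined with the gluing principle above, this upgrades $\Gamma_{j-1}$ from a $(d-1)$-ball to $\Gamma_j$ being a $(d-1)$-ball. After $t$ steps we conclude that $|\Gamma(P)| = \Gamma_t$ is homeomorphic to $E^d$.

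The main obstacle I expect is the bookkeeping required to maintain the invariant that a free codimension-one face always exists; equivalently, that the shelling never ``closes up'' the complex into a sphere before reaching $F_t$. The at-most-two hypothesis forbids local branching and is what makes $\Gamma_j$ a manifold with boundary at each stage, while the existence of a face lying in only one facet rules out the boundary-free (sphere) case globally. Packaging these two local-to-global statements into a clean inductive hypothesis — perhaps by strengthening to ``$\Gamma_j$ is a $(d-1)$-ball and $\partial \Gamma_j$ contains at least one facet'' — is the step that needs the most care.
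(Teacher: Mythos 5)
The paper offers no proof of Proposition~\ref{T:ShellBall}; it simply cites Bj\"orner's Proposition~4.3. Your strategy --- induction on the shelling length, maintaining the invariant that each partial complex $\Gamma_j := \bigcup_{i\le j} F_i$ is a $(d-1)$-ball and attaching the next closed cell along a $(d-2)$-ball in its boundary --- is the standard argument and matches Bj\"orner's. You also correctly read through the misprints in the statement as printed: the free face should be a $(d-2)$-cell contained in only one $(d-1)$-cell, and the conclusion should be $E^{d-1}$.

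The step left genuinely unestablished in your writeup is the claim that $F_j \cap \Gamma_{j-1}$ is never all of $\partial F_j$. Your ``iterating'' passage does not assemble into an argument (once $G$ lies in $F_j$ and in some earlier $F_i$, the at-most-two hypothesis already forbids a third facet; the phrase ``a second facet $F_{i'}$ unless $G$ is a free face of $F_i$'' does not correspond to a valid deduction), and the proposed incidence count is left unspecified. The clean way to close the gap is topological. Suppose $F_j \cap \Gamma_{j-1} = \partial F_j$. Each $(d-2)$-cell of $\partial F_j$ then lies in $F_j$ and, by the at-most-two hypothesis, in exactly one facet of $\Gamma_{j-1}$; hence $\partial F_j \subseteq \partial \Gamma_{j-1}$, and being a $(d-2)$-sphere inside the $(d-2)$-sphere $\partial\Gamma_{j-1}$ it equals it, so $\Gamma_j$ is two $(d-1)$-balls glued along their common boundary, i.e.\ a $(d-1)$-sphere. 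If $j<t$ this is impossible, since any $(d-2)$-cell in $F_{j+1}\cap\Gamma_j$ would then lie in three facets; if $j=t$ it is also impossible, since every $(d-2)$-cell of the sphere $\Gamma_t$ lies in exactly two facets, contradicting the hypothesized free face. Thus the intersection is always a proper, nonempty union of facets of $\partial F_j$, hence a $(d-2)$-ball, and your gluing lemma applies. (Note the same at-most-two observation is what shows $F_j \cap \Gamma_{j-1} \subseteq \partial\Gamma_{j-1}$, a hypothesis the gluing lemma tacitly requires but you do not check.)
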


The first theorem of this section presents our basic construction method.
The remainder of the section gives some extensions of this construction that allow us to obtain additional $h$-vectors.

\begin{thm}{\label{T:basicconstruction}}
Let $\mathbf{h}= (h_0,h_1, \ldots ,h_{d-1},h_d) \in \N^{d+1}$ with $h_0 =1$ and $h_d =0$.
Let $\partial h_j = \sum_{i=0}^j (h_i - h_{d-i})$.
If $\partial h_j > 0$ for $0 \leq j \leq \lfloor (d-1)/2 \rfloor$ then there exists a poset $P$ such that $|\Gamma(P)|$ is a $(d-1)$-ball and $h(P) = \mathbf{h}$.

Alternatively, let $0 < n < \lfloor (d-1)/2 \rfloor$ be the smallest number such that $\partial h_n = 0$.
If $\sum_{i=0}^d h_i$ is even and 
$\partial h_l \leq \sum_{i=0}^{n-1} h_{l-i}$ for $n+1 \leq l \leq d-(n+1)$
then there exists a poset $P$ such that $|\Gamma(P)|$ is a $(d-1)$-ball and $h(P) = \mathbf{h}$.
\end{thm}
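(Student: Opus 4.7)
The plan is to construct $\Gamma(P)$ via an explicit shelling $F_1,\ldots,F_t$ in which the multiset of restriction-face sizes $\{\,|\sigma(F_k)|\,\}$ realizes the prescribed vector $\mathbf{h}$ (exactly $h_i$ facets have $|\sigma|=i$) and then apply Proposition \ref{T:ShellBall} to conclude that $|\Gamma(P)|$ is a ball. I would induct on $\sum h_i$, attaching one $(d-1)$-simplex at a time while tracking the evolving boundary subcomplex. Because simplicial posets permit several facets on the same vertex set, the new vertices of each attached simplex may always be taken fresh, so the only real constraints are (i) that the current boundary contain a suitable codimension-$i$ configuration to serve as the next restriction face and (ii) that the codimension-one incidence count never exceed two.

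For the first case, with $\partial h_j>0$ throughout the relevant range, Theorem \ref{T:Spheres} guarantees that the target boundary sphere $(\partial h_0,\ldots,\partial h_{d-1})$ is itself realizable. I would process restriction sizes in non-decreasing order: start from a single simplex $F_1$ contributing to $h_0=1$, then attach $h_1$ simplices with $|\sigma|=1$, then $h_2$ with $|\sigma|=2$, and so on. The strict positivity of the partial boundary $h$-vectors (which evolve in a controlled way under each attachment) supplies at every stage a valid codimension-$i$ face around which the next simplex can be hung, so Proposition \ref{T:ShellBall} applies directly.

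For the second case, when $\partial h_n=0$ for the smallest positive $n$, Theorem \ref{T:SphereZero} forces the boundary sphere to contain an even number of facets on every vertex set, so facets contributing to $\partial(\Gamma(P))$ must be introduced in matched pairs sharing vertex sets. The parity of $\sum h_i$ together with the bounds $\partial h_l\leq\sum_{i=0}^{n-1}h_{l-i}$ is exactly what is needed to arrange this: I would execute the rounds $0,1,\ldots,n-1$ as in Case 1 to stockpile a reservoir of facets with small restriction faces, and then in rounds $l\geq n+1$ attach facets in matched pairs on identical vertex sets so that the boundary always evolves as a valid sphere with even multiplicities. The inequality $\partial h_l\leq\sum_{i=0}^{n-1}h_{l-i}$ asserts precisely that the reservoir from the earlier rounds is never exhausted when a new pairing at level $l$ is required.

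The principal obstacle is to verify that a valid attachment exists at each step: the current boundary must contain a codimension-$i$ face with $i$ incident boundary codimension-one facets arranged in the ``opposite-to-a-vertex'' pattern of a simplex, and the attachment must not create a third incidence on any codimension-one face. This is routine in Case 1 using only the positivity of the partial boundary $h$-vectors, but in Case 2 it is delicate: the forced pairings from the even-parity condition impose global constraints that interact subtly with the supply of local attachment sites, and the inequality $\partial h_l\leq\sum_{i=0}^{n-1}h_{l-i}$ must be invoked at exactly the moment when a new pairing is otherwise impossible. Once every attachment step is justified, Proposition \ref{T:ShellBall} immediately yields a $(d-1)$-ball with $h(P)=\mathbf{h}$.
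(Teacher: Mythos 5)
Your high-level plan — build a shellable CW-complex with prescribed restriction-face sizes and invoke Proposition~\ref{T:ShellBall} — is exactly the paper's strategy, but the specific construction you propose differs from the paper's and the hard part is left entirely unjustified, so there is a genuine gap.

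The paper does \emph{not} attach facets in non-decreasing order of restriction size. It adds facets in pairs $F_i,F_{i+1}$ ($i$ even) with $|\sigma(F_i)|=c_{i/2}$ and $|\sigma(F_{i+1})|=c_{a-i/2}$: a facet contributing to the \emph{start} of the $h$-vector is paired with one contributing to the \emph{end}, and the pairs work inward. This pairing keeps the intermediate boundary combinatorially close to $\partial F_1$ at every stage (each pair effectively swaps one boundary face for a parallel copy), which is what makes the verification tractable (Claims~\ref{C:consist}, \ref{C:intersect}, \ref{C:ball}). In your non-decreasing scheme, after the low-restriction rounds the boundary grows into a large sphere, and you then need an explicit ``open star of a vertex'' configuration consisting of $j$ boundary codimension-one faces with the correct pairwise intersections. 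You acknowledge this as ``the principal obstacle'' but offer no argument beyond the hope that positivity of the partial boundary $h$-vector suffices. That positivity is a numerical condition on the boundary $h$-vector, not a statement about the existence of a local configuration in the specific complex you have built so far; bridging that gap is precisely the work the paper's explicit identification rules do.

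Two further concrete problems. First, the claim that ``the new vertices of each attached simplex may always be taken fresh'' is false for $|\sigma(F_k)|\geq 2$: once two codimension-one faces of $F_k$ lie in $\Delta_{k-1}$, their union already covers all $d$ vertices of $F_k$, so every vertex of $F_k$ is forced to be identified with an existing one (only the face $\sigma(F_k)$ itself can be new). Second, your Case 2 sketch (``attach facets in matched pairs on identical vertex sets so the boundary evolves with even multiplicities'') does not correspond to the role the hypothesis $\partial h_l\leq\sum_{i=0}^{n-1}h_{l-i}$ actually plays. The paper instead sets $h'_{d-n}=h_{d-n}-1$, observes that all $\partial h'_j>0$, runs Case 1 on $\mathbf{h'}$, and appends a single extra facet $F_{a+1}$ with restriction size $d-n$; the inequality is used exactly once, to show $c_j+c_{a-j}\geq d-n$ so that the final facet's restriction face fits. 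Without a mechanism of this kind, your plan does not connect the stated inequality to the feasibility of any attachment step.
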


Many of the conditions in Theorem \ref{T:basicconstruction} are related to the restrictions on the $h$-vectors of balls in Theorem \ref{T:BasicCond}.
Also note that the inequalities in the case $\partial h_1 = 0$ of Theorem \ref{T:basicconstruction} are known to be necessary by Theorem \ref{T:Monotone}.

\begin{proof}
Let $F_i$ be the $i$th facet in our shelling and $\Delta_j$ be the complex $\cup_{i=1}^j F_i$.
Each facet $F_i$ in our complex contains $d$ vertices.
We label these vertices $\{1\}_i, \{2\}_i, \ldots , \{d\}_i$.
For any set $S \subseteq [d]$ we denote by $\{S\}_i$ the face of $F_i$ containing exactly the vertices $\{ \{l\}_i\}_{l \in S}$.
For example $\{1,2,3,4\}_3$ is the face of $F_3$ containing the vertices $\{1\}_3, \{2\}_3, \{3\}_3$ and $\{4\}_3$.
We use the notation $\{a:b\}$ and $\{a:b\}_c$ to refer to $\{a,a+1, \ldots , b\}$ and $\{a,a+1, \ldots , b\}_c$ respectively.
For each facet $F_i$ we describe below the identifications of faces of $F_i$ with faces in $\Delta_{i-1}$.
Most of the vertices of $F_i$ will be identified with vertices of $\Delta_{i-1}$, but in some cases $F_i$ may contain a new vertex.
For example, we may state that $\{1\}_2$ is identified with $\{1\}_1$ or that $\{1\}_2$ is a new face.
In general, we choose the vertex labels such that two identified faces contain vertices labeled by the same numbers.

First we consider the case where all of the $\partial h_i$ are strictly positive.
Let $a = \sum_{i=0}^{d} h_i$, which is the total number of facets in our shelling.
For $1 \leq k \leq a-1$ let $c_k$ be the integer such that $\sum_{i=0}^{c_k-1} h_i < k +1 \leq \sum_{i=0}^{c_k} h_i$.
Thus $c_k$ measures the location where the sum of the entries of the vector $\mathbf{h}$ reaches $k+1$.
Set $c_0 = 0$.
Then $|\{ k : c_k = j\}| = h_j$.
As an example, if $\mathbf{h} = (1,2,0,0,1,0)$ then $a=4$,
$c_0 = 0$, $c_1 = 1$, $c_2 = 1$, and $c_3 = 4$.

We begin the shelling with the facet $F_1$ which cannot have any identifications with any previous facets,
hence $|\sigma (F_1)| = 0$.
The remaining facets will be added in pairs $F_i$, $F_{i+1}$ where $i$ is even.
The restriction faces of $F_i$ and $F_{i+1}$ will be $\{1:c_{i/2}\}_{i}$ and $\{c_{i/2}+1 : c_{i/2}+c_{a-i/2} \}_{i+1}$ respectively.
We are pairing a facet contributing to the start of the $h$-vector with a facet contributing to the end of the $h$-vector and then working our way inward to the center of the $h$-vector with the subsequent pairs of facets.
We stop after adding the facet $F_a$.

We now describe how the facets $F_i$ and $F_{i+1}$ are attached to our complex.
For $i$ even, we introduce a new face $\{1:c_{i/2}\}_{i}$.
Let $S \subseteq [d]$.
If $S \supseteq \{1:c_{i/2}\}$ then $\{S\}_i$ can not be identified with any face in a previous facet.
If $S \supseteq \{c_{i/2-1}+1 : c_{i/2}\}$ but $S \not\supseteq \{1 : c_{i/2-1}\}$ then identify $\{S\}_i$ with $\{S\}_{i-1}$.
For all other sets $S \subset [d]$ identify $\{S\}_i$ with $\{S\}_1$.
The fact that these identifications are well defined follows from the case $k = i$ of Claim \ref{C:consist} below.

Continuing the shelling, $F_{i+1}$ is identified with $F_i$ except we replace the face $\{c_{i/2}+1 : c_{i/2}+c_{a-i/2} \}_{i}$ by a new face $\{c_{i/2}+1 : c_{i/2}+c_{a-i/2} \}_{i+1}$ with the same boundary.
The fact that all of the $\partial h_i$ are positive ensures that $c_{i/2}+c_{a-i/2}$ never exceeds $d$, so the construction can proceed as described.

\begin{claim}{\label{C:consist}}
Fix $i$ even with $2 \leq i \leq a$ and $k$ even with $2 \leq k \leq i$.
Let $S \subseteq [d]$ such that 
$S \not\supseteq \{c_{i/2-1}+1 : c_{i/2}\}$ and $S \not\supseteq \{1 : c_{k/2-1}\}$.
Then $\{S\}_{k-1} = \{S\}_1$.
\end{claim}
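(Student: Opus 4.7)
The plan is to prove Claim \ref{C:consist} by induction on $k$. The base case $k=2$ is vacuous, because the hypothesis $S \not\supseteq \{1:c_0\} = \emptyset$ can never hold. For the inductive step with $k \geq 4$, assume the claim at $k-2$. I would peel off one facet at a time: first establish $\{S\}_{k-1} = \{S\}_{k-2}$, then identify $\{S\}_{k-2}$ either with $\{S\}_1$ directly or with $\{S\}_{k-3}$ and apply the inductive hypothesis.

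For the first step, $F_{k-1}$ (odd index) is obtained from $F_{k-2}$ by introducing a single new face $\sigma(F_{k-1}) = \{c_{k/2-1}+1 : c_{k/2-1}+c_{a-k/2+1}\}$, so $\{S\}_{k-1} = \{S\}_{k-2}$ as long as $S \not\supseteq \sigma(F_{k-1})$. To verify this non-containment, the hypothesis $S \not\supseteq \{c_{i/2-1}+1 : c_{i/2}\}$ furnishes some $j$ in this set with $j \notin S$, and I would show $j \in \sigma(F_{k-1})$. The lower bound $j \geq c_{i/2-1}+1 \geq c_{k/2-1}+1$ is immediate from $i \geq k$ and monotonicity of the $c_\ell$. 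For the upper bound, introduce $H(v) := \sum_{u \leq v} h_u$, so that $c_\ell = \min\{v : H(v) \geq \ell+1\}$; then $c_{i/2} \leq c_{k/2-1}+c_{a-k/2+1}$ reduces to $H(c_{k/2-1}+c_{a-k/2+1}) \geq i/2+1$, which follows from $H(c_{k/2-1}+c_{a-k/2+1}) \geq H(c_{a-k/2+1}) \geq a-k/2+2 \geq i/2+1$ (the last step uses $i+k \leq 2a$).

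For the second step, apply the three cases of the identification rule for the even-indexed facet $F_{k-2}$. The first case $S \supseteq \{1:c_{k/2-1}\}$ is excluded by the second hypothesis. The ``otherwise'' case gives $\{S\}_{k-2} = \{S\}_1$ directly, so $\{S\}_{k-1} = \{S\}_1$. In the remaining (second) case, $\{S\}_{k-2} = \{S\}_{k-3}$ and the accompanying condition $S \not\supseteq \{1:c_{k/2-2}\}$ is exactly the second hypothesis of the claim at $k' = k-2$ (the first hypothesis, involving $i$, is unchanged). By induction $\{S\}_{k-3} = \{S\}_1$, and the chain $\{S\}_{k-1} = \{S\}_{k-2} = \{S\}_{k-3} = \{S\}_1$ closes.

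The main obstacle is the arithmetic inequality $c_{i/2} \leq c_{k/2-1}+c_{a-k/2+1}$; once recast via the cumulative sums $H$ it follows from $i+k \leq 2a$, which is automatic. Once this is in hand, the rest of the argument is a clean application of the construction's identification rules, with the second case's defining condition fitting the inductive statement at $k' = k-2$ precisely.
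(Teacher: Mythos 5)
Your proof follows the same structure as the paper's: show $\{S\}_{k-1} = \{S\}_{k-2}$ via the odd-index rule (which reduces to the containment $\{c_{i/2-1}+1 : c_{i/2}\} \subseteq \{c_{k/2-1}+1 : c_{k/2-1}+c_{a-k/2+1}\}$), then case-analyze the even-index identification rule for $F_{k-2}$ and invoke the inductive hypothesis at $k-2$. The only differences are cosmetic: you run a single induction on $k$ where the paper nests an induction on $k$ inside an outer induction on $i$ (the outer level serving mainly to record that $\Delta_{i'-1}$ is already well-defined), and you verify the arithmetic inequality $c_{i/2} \leq c_{k/2-1}+c_{a-k/2+1}$ via the cumulative sums $H(v)$ rather than the paper's chain $c_{i/2} \leq c_{a-i/2} \leq c_{a-(k-2)/2} \leq c_{(k-2)/2}+c_{a-(k-2)/2}$.
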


\begin{proof}[Proof of Claim \ref{C:consist}]
Our proof is by induction on $i$.
The base case $i=2$ is trivial.
Assuming the result for $i = i'-2$ we prove the result for $i = i'$.
The inductive hypothesis allows us to assume that the construction of $\Delta_{i'-1}$ is well defined.

We prove the case $i = i'$ by induction on $k$.
Again, the base case $k=2$ is trivial.
We now assume the result for $k=k'-2$ and prove the result for $k = k'$.

We first show that $\{S\}_{k'-1} = \{S\}_{k'-2}$.
By the construction of the odd index facets, this follows from showing $S \not\supseteq \{c_{(k'-2)/2}+1 : c_{(k'-2)/2}+c_{a-(k'-2)/2} \}$.
By assumption $S \not\supseteq \{c_{i'/2-1}+1 : c_{i'/2}\}$.
Thus it is enough to show 
\begin{equation}{\label{E:consist}}
c_{(k'-2)/2}+1 \leq c_{i'/2-1}+1 \qquad \mbox{and} \qquad c_{i'/2} \leq c_{(k'-2)/2}+c_{a-(k'-2)/2}.
\end{equation}
The first inequality in (\ref{E:consist}) follows from the monotonicity of the $c_l$.
Again using the monotonicity of the $c_l$ and the fact that $i' \leq a$ we have
\[c_{i'/2} \leq c_{a-i'/2}  \leq c_{a-(k'-2)/2} \leq c_{(k'-2)/2}+c_{a-(k'-2)/2}, \]
proving the second inequality.

We complete the proof of Claim \ref{C:consist} by showing $\{S\}_{k'-2} = \{S\}_1$.
By assumption, $S \not\supseteq \{1:c_{k'/2-1}\} = \{1:c_{((k'-2)/2}\}$.
Thus, if $S \supseteq \{c_{(k'-2)/2-1}+1: c_{(k'-2)/2}\}$ then $S \not\supseteq \{1: c_{(k'-2)/2-1}\}$.
In this case, by our construction of the even index facets we have $\{S\}_{k'-2} = \{S\}_{k'-3}$ and by the inductive hypothesis $\{S\}_{k'-3} = \{S\}_1$, giving the desired result.
If $S \not\supseteq \{c_{(k'-2)/2-1}+1 : c_{(k'-2)/2}\}$ then our construction identifies $\{S\}_{k'-2}$ and $\{S\}_1$, completing the proof.
\end{proof}

Let $\{\hat{\jmath}\}_k$ be the codimension-one face of $F_k$ that does not contain the vertex $\{j\}_k$.
\begin{claim}{\label{C:intersect}}
Let $i \geq 2$ be even.
Then for $2 \leq k \leq a$, $F_k \cap \Delta_{k-1}$ is given by
\[ F_i \cap \Delta_{i-1} = \cup_{j=1}^{c_{i/2}} \{ \hat{\jmath} \}_i
 \qquad \mbox{or} \qquad
F_{i+1} \cap \Delta_i = \cup_{j=c_{i/2}+1}^{c_{i/2}+c_{a-i/2}} \{ \hat{\jmath} \}_{i+1}.
\]
Hence the $F_i$ form a shelling order with $|\sigma (F_i)| = c_{i/2}$ and $|\sigma (F_{i+1})| = c_{a-i/2}$.
\end{claim}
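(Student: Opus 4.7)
My plan is to read off $F_k \cap \Delta_{k-1}$ directly from the identification rules in the construction, separately for the even- and odd-indexed facets. By definition, the intersection is the union of those closed faces of $F_k$ whose interiors were identified with faces already in $\Delta_{k-1}$, so it suffices to characterize which $\{S\}_k$ are identified versus declared new.

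For even $i \geq 2$, the construction introduces exactly one genuinely new face of $F_i$, namely $\{1:c_{i/2}\}_i$. The three cases of the rule for $F_i$ are mutually exclusive: if $S \supseteq \{1:c_{i/2}\}$ then $\{S\}_i$ is new; otherwise $\{S\}_i$ is identified either with $\{S\}_{i-1}$ or with $\{S\}_1$, both of which lie in $\Delta_{i-1}$. Hence $\{S\}_i \in \Delta_{i-1}$ if and only if $S \not\supseteq \{1:c_{i/2}\}$, i.e.\ some $j \in \{1,\ldots,c_{i/2}\}$ is absent from $S$, i.e.\ $\{S\}_i$ is a face of $\{\hat{\jmath}\}_i$ for such a $j$. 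Taking the union over all maximal such $S$ (namely $S=[d]\setminus\{j\}$) yields $F_i \cap \Delta_{i-1} = \cup_{j=1}^{c_{i/2}} \{\hat{\jmath}\}_i$. The odd case is analogous: $F_{i+1}$ differs from $F_i$ only by replacing $\{c_{i/2}+1:c_{i/2}+c_{a-i/2}\}_i$ with a new face of the same boundary, so $\{S\}_{i+1}\in\Delta_i$ precisely when $S \not\supseteq \{c_{i/2}+1:c_{i/2}+c_{a-i/2}\}$, which gives the second formula.

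Once the intersections are written as unions of codimension-one faces of the new facet, the shelling condition follows by inspection, and the restriction face $\sigma(F_k)$ is read off as the set of vertices $\{j\}_k$ whose opposite codimension-one face lies in $\Delta_{k-1}$; this gives $|\sigma(F_i)| = c_{i/2}$ and $|\sigma(F_{i+1})| = c_{a-i/2}$. The main point that needs care is verifying that no face declared ``new'' at step $k$ has in fact been forced, through some earlier chain of identifications, to coincide with a pre-existing face in $\Delta_{k-1}$; this is exactly the global consistency of the identifications ensured by Claim \ref{C:consist}, which thus plays the essential supporting role in making the case analysis above unambiguous.
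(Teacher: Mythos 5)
Your proof is correct and takes essentially the same approach as the paper: in both cases the key observation is that a face $\{S\}_i$ lies in $\Delta_{i-1}$ precisely when $S$ does not contain the new face's index set, which immediately yields the stated union of codimension-one faces. Your explicit acknowledgment that Claim \ref{C:consist} is what makes the ``new face'' declaration unambiguous matches the paper's use of that claim as well.
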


\begin{proof}[Proof of Claim \ref{C:intersect}]
For $1 \leq j \leq c_{i/2-1}$ each of the faces $\{ \hat{\jmath} \}_i$ is also in $F_{i-1}$ while for $c_{i/2-1} < j \leq c_{i/2}$ the face $\{ \hat{\jmath} \}_i$ is also a face of $F_1$.
Therefore $\cup_{j=1}^{c_{i/2}} \{ \hat{\jmath} \}_i \subseteq (F_i \cap \Delta_{i-1})$.
To see the reverse inclusion note that any face in $F_i \backslash \left( \cup_{j=1}^{c_{i/2}} \{ \hat{\jmath} \}_i \right)$ contains the face $\{1:c_{i/2}\}_i$, which is a new face and therefore not in $\Delta_{i-1}$.

The proof for $F_{i+1}$ is handled in a similar manner with $\{ \hat{\jmath} \}_{i+1} \subseteq F_i$ for $c_{i/2}+1 \leq j \leq c_{i/2}+c_{a-i/2}$ and $\{c_{i/2}+1:c_{i/2}+c_{a-i/2}\}_{i+1} \not\in \Delta_{i}$.
The last part of Claim \ref{C:intersect} now follows from the definition of a shelling.
\end{proof}

\begin{claim}{\label{C:ball}}
For $1 \leq p \leq a$ each codimension-one face of $\Delta_p$ is contained in at most two facets and there exists a codimension-one face of $\Delta_p$ that is contained in only one facet.
\end{claim}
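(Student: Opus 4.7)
The plan is to induct on $p$. The base case $p = 1$ is immediate: $\Delta_1 = F_1$ has $d$ distinct codimension-one faces, each lying in only $F_1$. For the inductive step, suppose the claim holds for $\Delta_{p-1}$ and attach $F_p$. By Claim \ref{C:intersect}, $F_p \cap \Delta_{p-1}$ is a union of certain codimension-one faces of $F_p$, and every other codimension-one face of $F_p$ lies only in $F_p$ inside $\Delta_p$. Since $|\sigma(F_p)|$ is either $c_{p/2}$ or $c_{a-(p-1)/2}$, which is at most $c_{a-1} \leq d - 1$ (because $h_d = 0$ forces $\sum_{i=0}^{d-1} h_i = a$), $F_p$ always has at least one codimension-one face not in $\Delta_{p-1}$, supplying the required single-facet face in $\Delta_p$.

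The remaining task is to confirm that no identification at step $p$ places a codimension-one face in three facets. Equivalently, every codimension-one face of $\Delta_{p-1}$ that is identified with a codimension-one face of $F_p$ must have been in exactly one facet of $\Delta_{p-1}$. I would verify this in two cases matching the construction. If $p = i$ is even, the identifications are $\{\hat{\jmath}\}_i \equiv \{\hat{\jmath}\}_{i-1}$ for $j \leq c_{i/2-1}$ and $\{\hat{\jmath}\}_i \equiv \{\hat{\jmath}\}_1$ for $c_{i/2-1} < j \leq c_{i/2}$. A short check shows that $j \leq c_{i/2-1} = c_{(i-2)/2}$ excludes $j$ from the identification range $\{c_{(i-2)/2}+1, \ldots, c_{(i-2)/2}+c_{a-(i-2)/2}\}$ used when $F_{i-1}$ was attached, so $\{\hat{\jmath}\}_{i-1}$ was introduced as a new face and lives only in $F_{i-1}$; in the second subcase, monotonicity of $(c_l)$ forbids any earlier even step from identifying $\{\hat{\jmath}\}_1$ with anything else. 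If $p = i+1$ with $i$ even, the identifications are $\{\hat{\jmath}\}_{i+1} \equiv \{\hat{\jmath}\}_i$ for $j \in \{c_{i/2}+1, \ldots, c_{i/2}+c_{a-i/2}\}$; for such $j > c_{i/2}$ the face $\{\hat{\jmath}\}_i$ contains the new restriction face $\{1:c_{i/2}\}_i$ and was therefore introduced as a brand-new face of $F_i$, so it lives in only $F_i$ inside $\Delta_i$.

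The main obstacle will be the bookkeeping of identification chains: I need to rule out the possibility that a codimension-one face gets merged into a third facet via a transitive chain through intermediate steps. Monotonicity of the sequence $(c_l)$ is the decisive tool, since the defining inequalities $c_{l-1} < j \leq c_l$ single out a unique index for each relevant $j$, and the complementary inequalities $j > c_{i/2}$ versus $j \leq c_{i/2}$ prevent chaining between the even-step and odd-step identifications. Once these are controlled, the inductive step closes and the claim follows.
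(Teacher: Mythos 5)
Your proof is correct and takes essentially the same approach as the paper's: both rest on Claim \ref{C:intersect} and monotonicity of the sequence $(c_l)$ to control how codimension-one faces are shared. The paper runs the check forward (for each new codimension-one face, naming the unique later facet that may contain it), while you run the same check backward as an induction on $p$ (verifying that each face being identified at step $p$ was previously in exactly one facet of $\Delta_{p-1}$); these are dual framings of the identical verification.
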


\begin{proof}[Proof of Claim \ref{C:ball}]
For each \emph{new} codimension-one face $\{\hat{\jmath}\}_i$ of $F_i$ we describe the single facet $F_k$ with $k >i$ that may also contain $\{\hat{\jmath}\}_i$.
When $i=1$, $\{\hat{\jmath}\}_1$ is a face of $F_k$ where $k \leq a$ is the smallest even integer such that $c_{k/2} \geq j$ (if such a $k$ exists).
For $i>1$, $\{\hat{\jmath}\}_i$ may be a face of $F_{i+1}$.

Using Claim \ref{C:intersect}, it is straightforward to check that each codimension-one face in any $F_l \cap \Delta_{l-1}$ is described by one of the above cases.
Therefore, any codimension-one face is contained in at most two facets. 
Also note that the facet $\{\hat{d}\}_1$ is only contained in $F_1$, completing the proof of Claim \ref{C:ball}.
\end{proof}

By Claim \ref{C:ball} and Proposition \ref{T:ShellBall} we know that $\Delta_{a}$ is a ball.
Further, using the $|\sigma (F_i)|$ from Claim \ref{C:intersect} to count the contribution of each facet to the $h$-vector and the fact $|\{ i : c_i = j\}| = h_j$ we know that $\Delta_{a}$ has the desired $h$-vector.

We now consider the case where $0 < n < \lfloor (d-1)/2 \rfloor$ is the smallest integer such that $\partial h_n = 0$.
Define a new vector $\mathbf{h'}= (h'_0,h'_1, \ldots ,h'_{d-1},h'_d)$ by $h'_i = h_i$ for $i \neq d-n$ and $h'_{d-n} = h_{d-n}-1$.
From the definition of the $\partial h_i$, in order for $\partial h_{n-1} > 0$ and $\partial h_n =0$ we must have $h_{d-n} >0$, so the vector ${\bf h'}$ has non-negative entries.
Additionally, for $n \leq i \leq \lfloor (d-1)/2 \rfloor$ we have $\partial h'_{i} = \partial h_i+1$ ensuring that all of the $\partial h'_i$ are strictly positive.
We can therefore apply the construction of the previous case to create a ball with $h$-vector ${\bf h'}$.
In what follows we take $a = \sum_{i=0}^{d} h'_{i} = (\sum_{i=0}^{d} h_{i}) - 1$ to match the definition of $a$ in the previous case.

By assumption $\sum_{i=0}^d h_i$ is even, hence the construction of the ball with $h$-vector ${\bf h'}$ ends with the facet $F_a$ with $a$ odd.
We complete the construction of a ball with $h$-vector ${\bf h}$ by adding a facet $F_{a+1}$.
We attach $F_{a+1}$ to the ball $\Delta_{a}$ using the above rules for attaching an even index facet but acting as if $c_{(a+1)/2} = d-n$, so $\{1:d-n\}_{a+1}$ is a new face.
To complete the proof we must extend the results of Claims \ref{C:consist}, \ref{C:intersect}, and \ref{C:ball} to include the additional facet $F_{a+1}$.

First we prove Claim \ref{C:consist} for the case $i=a+1$.
The proof is the same as for smaller $i$ values except that the second inequality in (\ref{E:consist}) requires a different justification.
Rewriting this inequality, for $1 \leq j \leq (a-1)/2$ we must show 
\begin{equation*}
d-n \leq c_{j}+c_{a-j}.
\end{equation*}

First consider the case $c_j \geq d - 2n$.
Since $n < \lfloor (d-1)/2 \rfloor$, adding the equations $\sum_{m=0}^d h_m = a+1$ and $\sum_{m=0}^n(h_m-h_{d-m})=0$ and then removing some non-negative terms from the left-hand side yields $\sum_{m=0}^n h_m \leq (a+1)/2$.
Hence $\sum_{m=0}^n h'_m \leq (a+1)/2$ and $c_{(a+1)/2} \geq n$.
Since $a-j \geq (a+1)/2$ we have $c_{a-j} \geq c_{(a+1)/2} \geq n$, completing the proof of this case.

For the case $1 \leq c_j \leq d - 2n -1$ note that we can rewrite the assumption $\partial h_l \leq \sum_{m=0}^{n-1} h_{l-m}$ as 
\begin{equation*}
 \sum_{m=0}^{l-n} h_m \leq \sum_{m=0}^l {h_{d-m}}
\qquad \mbox{ or } \qquad
 \sum_{m=1}^{l-n} h'_m \leq \sum_{m=0}^l {h'_{d-m}}
\end{equation*}
where $ n+1 \leq l \leq d-(n+1)$.
By the second inequality, choosing $l$ such that $c_j = l -n$ we have $c_{a-j} \geq d-l$.
Therefore $c_j+c_{a-j} \geq d-n$, as desired.

We extend Claim \ref{C:intersect} by showing
\[ F_{a+1} \cap \Delta_{a} = \cup_{j=1}^{d-n} \{ \hat{\jmath} \}_{a+1}. \]
This follows from the proof of the even case of Claim \ref{C:intersect} by treating $c_{(a+1)/2} = d-n$.

The only change needed in the proof of Claim \ref{C:ball} to allow $p = a+1$ is noting that the faces $\{\hat{\jmath} \}_1$ for $c_{(a-1)/2} < j \leq d-n$ are also faces of $F_{a+1}$. 
\end{proof}

We next present a slight augmentation of the previous theorem that allows us to deal with some additional cases involving $h$-vectors that have a single sequence of non-zero entries.

\begin{thm}{\label{T:NonZeroConstruction}}
Let $\mathbf{h}= (h_0,h_1, \ldots ,h_{d-1},h_d) \in \N^{d+1}$ with $h_0 =1$.
Assume there exists $k \in \{1,2, \ldots, d-1\}$ such that $h_j = 0$ for $j>k$ and $h_j > 0$ for $1 \leq j \leq k$.
Define $\mathbf{h'} = (1,h_1-1,h_2-1, \ldots ,h_{k}-1,0, \ldots, 0)$.
If $\mathbf{h'}$ satisfies the conditions of Theorem \ref{T:basicconstruction} then there exists a poset $P$ such that $|\Gamma(P)|$ is a $(d-1)$-ball and $h(P) = \mathbf{h}$.
\end{thm}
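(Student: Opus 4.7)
My strategy is to construct the desired ball in two stages. First, since $\mathbf{h'}$ satisfies the hypotheses of Theorem~\ref{T:basicconstruction} by assumption, I would apply that theorem to produce a shellable $(d-1)$-ball $\Delta'$ with $h(\Delta') = \mathbf{h'}$. Second, since
\[ \mathbf{h} - \mathbf{h'} = (0, 1, 1, \ldots, 1, 0, \ldots, 0) \]
has $k$ ones in positions $1$ through $k$, I would extend the shelling of $\Delta'$ by appending $k$ additional facets $G_1, G_2, \ldots, G_k$ with restriction face sizes $|\sigma(G_j)| = j$. The resulting complex will then have $h$-vector $\mathbf{h'} + (0, 1, \ldots, 1, 0, \ldots, 0) = \mathbf{h}$.

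To build each $G_j$, the plan is to have it share one codim-one face with the first facet $F_1$ of $\Delta'$ (which satisfies $|\sigma(F_1)| = 0$ in the original shelling) and one codim-one face with each of $G_1, \ldots, G_{j-1}$, for a total of $j$ codim-one faces shared with the previously constructed complex. The identifications must be chosen so that: (i) the $j$ shared codim-one faces of $G_j$ meet in a common $(d-j-1)$-face, producing the pure $(d-2)$-dimensional attachment subcomplex required at each step of a shelling; (ii) the boundary codim-one face of $F_1$ used by $G_j$ is distinct across different indices $j$, so no codim-one face of $F_1$ ever lies in more than two facets; and (iii) the codim-one face of $G_m$ used to attach $G_j$ (for $m < j$) is distinct from any earlier use, preserving the at-most-two-facets property throughout the extension.

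Having set up the attachments, I would verify the hypotheses of Proposition~\ref{T:ShellBall} by proving analogues of Claims~\ref{C:consist} and~\ref{C:intersect}: the sequence $F_1, \ldots, F_a, G_1, \ldots, G_k$ forms a valid shelling in which each successive intersection is the appropriate pure union of codim-one faces; each codim-one face lies in at most two facets by (ii) and (iii); and at least one codim-one face of $\Delta'$ not used in any attachment (for instance a boundary face of $F_1$ left over after the $k$ attachments) remains in only one facet. Hence the resulting complex is a $(d-1)$-ball, and summing the restriction face sizes recovers $h = \mathbf{h}$.

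The main obstacle is arranging the attachments consistently. First, one must ensure enough boundary codim-one faces are available: by Claim~\ref{C:ball}, $\{\hat{\jmath}\}_1$ remains on the boundary of $\Delta'$ whenever $\jmath > c_{\lfloor a/2 \rfloor}$, and because $h'_i = 0$ for $i > k$ forces $c_{\lfloor a/2 \rfloor} \leq k$, at least $d-k$ boundary codim-one faces of $F_1$ are available; when $k$ approaches $d-1$ and this is insufficient, one can reroute some attachments to boundary codim-one faces of other facets of $\Delta'$. The more delicate issue is labeling: the vertex sets of the $G_j$'s must be chosen so that the $j$ codim-one faces coming from $F_1$ and from the earlier $G_m$'s assemble into a single star subcomplex around a common $(d-j-1)$-face of $G_j$. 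The flexibility of the simplicial poset structure—which allows multiple facets on the same vertex set and permits selective identification of subfaces—makes this coordination achievable, and it forms the technical heart of the proof.
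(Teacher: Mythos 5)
Your proposal reverses the order of operations used in the paper, and this reversal is where the argument breaks down. The paper constructs the complex $\Delta_k = G_0 \cup G_1 \cup \cdots \cup G_k$ with $G_i = [d+1]\setminus\{i+1\}$ \emph{first}: this is a genuine simplicial complex on $d+1$ vertices, namely a fan of $k+1$ facets of $\partial\Delta^d$ all containing the common codimension-$(k+1)$ face $\{k+2,\ldots,d+1\}$. It then runs the Theorem~\ref{T:basicconstruction} machinery for $\mathbf{h'}$ with the initial facet $F_1$ deleted, treating the vertex set $[d+1]\setminus\{k+2\}$ as a ``virtual'' $F_1$, and checks that every codimension-one face of this virtual $F_1$ that the later facets need is already present on $\partial\Delta_k$ (the key observation is that $[d+1]\setminus\{j,k+2\}$ lies in exactly the one facet $G_{j-1}$ for $1 \le j \le k+1$). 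You instead build the Theorem~\ref{T:basicconstruction} ball $\Delta'$ first and then try to append $G_1,\ldots,G_k$, which is a different shelling order of what would have to be a different complex.

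The concrete obstruction is the one you flag but do not resolve. If $a = \sum_i h_i'$, then in the first case of Theorem~\ref{T:basicconstruction} the later facets $F_2,\ldots,F_a$ consume the codimension-one faces $\{\hat\jmath\}_1$ of $F_1$ for $1 \le j \le c_{\lfloor a/2\rfloor}$, leaving only $d - c_{\lfloor a/2\rfloor}$ of them on $\partial\Delta'$. Your scheme needs $k$ of them, one for each $G_j$. Take $d \ge 4$, $k = d-1$, and $h_i = 2$ for $1 \le i \le k$, so $\mathbf{h'} = (1,1,\ldots,1,0,\ldots,0)$ with $k+1$ ones and $a = k+1$; then $c_{\lfloor a/2\rfloor} = \lfloor(k+1)/2\rfloor$ and the number of available boundary codimension-one faces of $F_1$ is $d - \lfloor d/2\rfloor < d-1 = k$. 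So there are not enough, and this is not a borderline corner case. The proposed fix, rerouting attachments to codimension-one faces of \emph{other} facets of $\Delta'$, destroys the fan structure that makes the restriction-face count $|\sigma(G_j)| = j$ automatic: you would then need $G_j$ to meet codimension-one faces scattered across several facets of $\Delta'$ together with faces of $G_1,\ldots,G_{j-1}$ in a configuration that still assembles into a legal shelling step, and no such configuration is exhibited. Since you yourself describe this coordination as ``the technical heart of the proof,'' the argument is not complete; the paper's device of building the fan $\Delta_k$ first and attaching the $\mathbf{h'}$ facets to its boundary is precisely what avoids having to solve this harder matching problem.
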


\begin{proof}
We once again construct a shellable CW-complex with the desired $h$-vector.
We begin our construction with a series of facets $\{G_i\}_{i=0}^k$.
Let $\Delta_j = \cup_{i=0}^j G_i$.
The complex $\Delta_k$ will be a \emph{simplicial complex} on vertex set $[d+1]$.
We can therefore think of the faces of the $\Delta_k$ as subsets of $[d+1]$ as well as topological simplexes.

For $0 \leq i \leq k$ define $G_i = [d+1] - \{i+1\}$.
Then for $1 \leq i \leq k$
\[ G_i \cap \Delta_{i-1} = \cup_{j=1}^{i} \left( G_i -\{j\} \right). \]
Hence $\sigma(G_i) = \{1,2, \ldots, i\}$ and $|\sigma(G_i)| = i$.

We now complete the proof by performing the construction of Theorem \ref{T:basicconstruction} on the vector $\mathbf{h'}$ with a few alterations.
We omit the initial facet $F_1$ in Theorem \ref{T:basicconstruction}.
Instead we attach all of our additional facets to the boundary of $\Delta_k$.
In our new construction we replace the vertices $\{j\}_1$, $1 \leq j \leq d$, from Theorem \ref{T:basicconstruction} with the vertices $[d+1] \backslash \{k+2\}$ of $\Delta_k$, identifying vertices in the order preserving way.
We then replace the faces of $\partial F_1$ from Theorem \ref{T:basicconstruction} with the faces defined by the corresponding sets of vertices of $\Delta_k$.

We claim that performing the construction of Theorem \ref{T:basicconstruction} with this alteration gives a shellable ball, with a shelling order given by concatenating the order $G_1, G_2, \ldots, G_k$ with the order given by Theorem \ref{T:basicconstruction}.
To prove this we take every facet of $\partial F_1$ that is contained in a later facet in the construction of Theorem \ref{T:basicconstruction} and show that the corresponding $(d-1)$-subset of $[d+1] \backslash \{k+2\}$ is a face of $\partial \Delta_k$.
Let $H = [d+1] \backslash \{j,k+2\}$ be a $(d-1)$-subset of $[d+1] \backslash \{k+2\}$.
For $1 \leq j \leq k+1$, the only facet of $\Delta_k$ that contains $H$ is $G_{j-1}$, so $H$ is in $ \partial \Delta_k$.
For $k+3 \leq j \leq d+1$, the facet of $\partial F_1$ corresponding to $H$ will never be used in in the construction of Theorem \ref{T:basicconstruction} since $h_l = 0$ for $l>k$.

Totaling the contributions of all of the $|\sigma(G_i)|$ shows that the ball created in this manner has the desired $h$-vector.
\end{proof}

We now present one final construction specific to dimension five.
This construction allows us to complete the characterization of the possible $h$-vectors of five-balls in the following section.

\begin{prop}{\label{T:Dim5Construction}}
Let $\mathbf{h}= (h_0,h_1,h_2,h_3,h_4,0,0) \in \N^{7}$ with $h_0 =1$, $h_1,h_2 \neq 0$, $\sum_{i=0}^4 h_i$ odd, and $\partial h_2 = 1+ h_1 +h_2 -h_4 = 0$.
Then there exists a poset $P$ such that $|\Gamma(P)|$ is a five-ball and $h(P) = \mathbf{h}$.
\end{prop}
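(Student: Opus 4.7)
The hypothesis $\partial h_2 = 1 + h_1 + h_2 - h_4 = 0$ together with $\sum_{i=0}^4 h_i$ odd puts us outside the reach of Theorem \ref{T:basicconstruction}: the first case requires $\partial h_j > 0$ for $0 \leq j \leq \lfloor (d-1)/2 \rfloor = 2$, while the second case requires the smallest $n$ with $\partial h_n = 0$ to be strictly less than $\lfloor (d-1)/2 \rfloor$, whereas here $n = 2$ coincides with the boundary value $\lfloor (d-1)/2 \rfloor$. The plan is to mimic the strategy of the second case of that theorem: pass to an auxiliary $h$-vector $\mathbf{h}'$ that does satisfy the first case, invoke Theorem \ref{T:basicconstruction} to build a shellable ball $\Delta$ with $h(\Delta) = \mathbf{h}'$, and then extend the shelling by a single further facet $F_{a+1}$ with $|\sigma(F_{a+1})| = 4$ to restore the missing contribution to $h_4$.

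Set $\mathbf{h}' = (1, h_1, h_2, h_3, h_4 - 1, 0, 0)$. Since $h_4 = 1 + h_1 + h_2 \geq 3$ this lies in $\mathbb{N}^7$. Direct computation gives $\partial h'_0 = 1$, $\partial h'_1 = 1 + h_1 \geq 2$, and $\partial h'_2 = \partial h_2 + 1 = 1$, all strictly positive, while $\sum h'_i = \sum h_i - 1$ is even. Theorem \ref{T:basicconstruction} (first case) therefore yields a shellable $5$-ball $\Delta$ with $h(\Delta) = \mathbf{h}'$ and $a = \sum h'_i = 1 + 2h_1 + 2h_2 + h_3$ facets; note $a$ is even. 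A short calculation shows that $a/2 = h_1 + h_2 + (h_3+1)/2$ lies in the interval $[h_1 + h_2 + 1,\, h_1 + h_2 + h_3]$ (using $h_3 \geq 1$), so the last facet $F_a$ produced by the construction satisfies $c_{a/2} = 3$. Now attach a new facet $F_{a+1}$ to $\Delta$ by adapting the even-index attachment rule of Theorem \ref{T:basicconstruction} with $c_{(a+1)/2}$ treated as though it were $d - n = 4$: declare $\{1{:}4\}_{a+1}$ a new face, and identify each remaining face $\{S\}_{a+1}$ with $\{S\}_a$ or $\{S\}_1$ according to the analog of the even-index rule. By construction $|\sigma(F_{a+1})| = 4$, contributing $+1$ to $h_4$ and producing $h(P) = \mathbf{h}$.

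The main obstacle is verifying that this attachment is well-defined and that the extended complex remains a shellable $5$-ball satisfying Proposition \ref{T:ShellBall}. This amounts to establishing the analogs of Claims \ref{C:consist}, \ref{C:intersect}, and \ref{C:ball} for the extra facet $F_{a+1}$. The crucial step is the analog of the inequality in (\ref{E:consist}), namely $d - n = 4 \leq c_j + c_{a-j}$ for $1 \leq j \leq (a-1)/2$; this is exactly the place where the proof of Theorem \ref{T:basicconstruction} leverages the strict bound $n < \lfloor (d-1)/2 \rfloor$, so a dimension-five-specific argument is required in our boundary case. This is where the hypotheses $h_1, h_2 \geq 1$ and the oddness of $h_3$ enter essentially: positivity of $h_1$ and $h_2$ prevents the values $c_{a-j}$ from lingering at the small values $1$ or $2$ in the relevant range, while the parity of $h_3$ places $a/2$ in the interior of the $c_j = 3$ block, allowing a consistent attachment to be read off. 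Once these inequalities and identifications are checked, the extended shelling is valid, and Proposition \ref{T:ShellBall} guarantees that $|\Gamma(P)|$ is a five-ball with the desired $h$-vector.
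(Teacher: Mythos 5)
Your plan is to build a shellable ball $\Delta_a$ with $h$-vector $\mathbf{h}' = (1,h_1,h_2,h_3,h_4-1,0,0)$ via the first case of Theorem \ref{T:basicconstruction}, and then attach one further facet $F_{a+1}$ with $|\sigma(F_{a+1})| = 4$. This is a genuinely different (and on its face more economical) route than the paper, which instead builds a tiny ball with $h$-vector $(1,0,0,h_3-1,0,0,0)$, hand-attaches six explicit facets to reach $(1,1,1,h_3,3,0,0)$, and only then runs Theorem \ref{T:basicconstruction} on $(1,h_1-1,h_2-1,0,h_4-3,0,0)$. Unfortunately, your proposed attachment of $F_{a+1}$ has a real gap and cannot be carried out as described.

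The problem is a parity mismatch that your appeal to the second case of Theorem \ref{T:basicconstruction} glosses over. In the paper's second case, $a = \sum h'_i = \sum h_i - 1$ is \emph{odd}, so the last facet $F_a$ of the $\mathbf{h}'$-ball is odd-indexed. The rule for attaching the extra even-indexed facet $F_{a+1}$ identifies $\{\hat m\}_{a+1}$ with $\{\hat m\}_a$ precisely for $m \leq c_{(a-1)/2}$, and Claim \ref{C:intersect} shows these $\{\hat m\}_a$ are exactly the \emph{boundary} codimension-one faces of $F_a$ in $\Delta_a$; the identification is therefore legal. In your setup, $\sum h_i$ is odd, so $a = \sum h'_i$ is \emph{even}, and $F_a$ is itself an even-indexed facet with $F_a \cap \Delta_{a-1} = \cup_{j=1}^{c_{a/2}}\{\hat\jmath\}_a = \cup_{j=1}^{3}\{\hat\jmath\}_a$. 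Hence $\{\hat 1\}_a, \{\hat 2\}_a, \{\hat 3\}_a$ are \emph{interior} codimension-one faces of $\Delta_a$, and the only boundary faces of $F_a$ are $\{\hat 4\}_a, \{\hat 5\}_a, \{\hat 6\}_a$. Any direct transcription of the even-index rule with $c_{(a+1)/2}$ ``treated as $4$'' (and $c_{(a+1)/2-1}$ read as $c_{a/2} = 3$ or as $c_{a/2-1}$) forces identifications $\{\hat m\}_{a+1} = \{\hat m\}_a$ for some $m \in \{1,2,3\}$, which would put three facets on a codimension-one face and destroy the manifold/ball structure. Attaching $F_{a+1}$ as the odd-index partner of $F_a$ instead is consistent but gives $|\sigma(F_{a+1})| = c_{a/2} = 3$, not the required $4$. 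So neither standard rule produces what you need, and the inequality $4 \leq c_j + c_{a-j}$ you focus on — while true under your hypotheses — is not where the obstruction lies. A correct construction here requires a genuinely new attachment scheme, which is essentially why the paper resorts to the longer three-stage argument.
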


\begin{proof}
We begin this construction by using Theorem \ref{T:basicconstruction} to create a 5-ball with $h$-vector $(1,0,0,h_3-1,0,0,0)$.
Note that since $1+ h_1 +h_2 -h_4 = 0$, the parity of $h_3$ is the same as the parity of $\sum_{i=0}^4 h_i$, which we assumed to be odd.
Therefore $h_3-1$ is even and non-negative.
When we complete this construction the facets of the boundary of our ball are
\begin{eqnarray*}
\{1,2,3,4,5 \}_1,  \qquad
\{1,2,3,4,6 \}_1,  \qquad
\{1,2,3,5,6 \}_1,  \qquad \\
\{2,3,4,5,6 \}_{h_3},  \qquad
\{1,3,4,5,6 \}_{h_3},  \qquad \mbox{and} \qquad
\{1,2,4,5,6 \}_{h_3}.
\end{eqnarray*}
Further, all of the faces of $F_{h_3}$ are identified with the corresponding faces of $F_1$ except for $\{1,2,3\}_{h_3} = \{1,2,3\}_{h_3-1}$, $\{4,5,6\}_{h_3}$, and all faces containing one of these two faces (there is also the easier case where $h_3 = 1$ and we have only one facet in this initial part of the shelling).
 
We now describe the next six facets of our shelling, altering our notation slightly to make the description easier to follow.

\noindent
$F_{h_3+1} = \{1,2,3,4,5,7\}_{h_3+1}$ where $\{7\}_{h_3+1}$ is a new vertex.

$\{1,2,3,4,5\}_{h_3+1}$ is identified with $\{1,2,3,4,5\}_1$.

Hence $\sigma(F_{h_3+1}) = \{7\}_{h_3+1}$.

\noindent
$F_{h_3+2} = \{1,2,3,4,6,7\}_{h_3+2}$.

$\{1,2,3,4,6\}_{h_3+2}$ is identified with $\{1,2,3,4,6\}_1$.

$\{1,2,3,4,7\}_{h_3+2}$ is identified with $\{1,2,3,4,7\}_{h_3+1}$.

Hence $\sigma(F_{h_3+2}) = \{6,7\}_{h_3+2}$.

\noindent
$F_{h_3+3} = \{1,2,3,5,6,7\}_{h_3+3}$.

$\{1,2,3,5,6\}_{h_3+3}$ is identified with $\{1,2,3,5,6\}_1$.

$\{1,2,3,5,7\}_{h_3+3}$ is identified with $\{1,2,3,5,7\}_{h_3+1}$.

$\{1,2,3,6,7\}_{h_3+3}$ is identified with $\{1,2,3,6,7\}_{h_3+2}$.

Hence $\sigma(F_{h_3+3}) = \{5,6,7\}_{h_3+3}$.

\noindent
$F_{h_3+4} = \{1,2,4,5,6,7\}_{h_3+4}$ with $\{4,5,6\}_{h_3+4} = \{4,5,6\}_{h_3}$.

$\{1,2,4,5,6\}_{h_3+4}$ is identified with $\{1,2,4,5,6\}_{h_3}$.

$\{1,2,4,5,7\}_{h_3+4}$ is identified with $\{1,2,4,5,7\}_{h_3+1}$.

$\{1,2,4,6,7\}_{h_3+4}$ is identified with $\{1,2,4,6,7\}_{h_3+2}$.

$\{1,2,5,6,7\}_{h_3+4}$ is identified with $\{1,2,5,6,7\}_{h_3+3}$.

Hence $\sigma(F_{h_3+4}) = \{4,5,6,7\}_{h_3+4}$.

\noindent
$F_{h_3+5} = \{1,3,4,5,6,7\}_{h_3+5}$ with $\{4,5,6\}_{h_3+5} = \{4,5,6\}_{h_3}$ and new face $\{4,5,6,7\}_{h_3+5}$.

$\{1,3,4,5,6\}_{h_3+5}$ is identified with $\{1,3,4,5,6\}_{h_3}$.

$\{1,3,4,5,7\}_{h_3+5}$ is identified with $\{1,3,4,5,7\}_{h_3+1}$.

$\{1,3,4,6,7\}_{h_3+5}$ is identified with $\{1,3,4,6,7\}_{h_3+2}$.

$\{1,3,5,6,7\}_{h_3+5}$ is identified with $\{1,3,5,6,7\}_{h_3+3}$.

Hence $\sigma(F_{h_3+5}) = \{4,5,6,7\}_{h_3+5}$.

\noindent
$F_{h_3+6} = \{2,3,4,5,6,7\}_{h_3+6}$ with $\{4,5,6\}_{h_3+6} = \{4,5,6\}_{h_3}$ and new face $\{4,5,6,7\}_{h_3+6}$.

$\{2,3,4,5,6\}_{h_3+6}$ is identified with $\{2,3,4,5,6\}_{h_3}$.

$\{2,3,4,5,7\}_{h_3+6}$ is identified with $\{2,3,4,5,7\}_{h_3+1}$.

$\{2,3,4,6,7\}_{h_3+6}$ is identified with $\{2,3,4,6,7\}_{h_3+2}$.

$\{2,3,5,6,7\}_{h_3+6}$ is identified with $\{2,3,5,6,7\}_{h_3+3}$.

Hence $\sigma(F_{h_3+6}) = \{4,5,6,7\}_{h_3+6}$.

\noindent
Examining the $|\sigma(F_i)|$ shows that the ball we have constructed has $h$-vector  $(1,1,1,h_3,3,0,0)$.

We finish our construction using a slightly altered version of the construction of Theorem \ref{T:basicconstruction} on the vector $(1,h_1-1,h_2-1,0,h_4-3,0,0)$.
In place of the initial facet from Theorem \ref{T:basicconstruction} we use the final facet $F_{h_3+6}$ from the above construction (with the order preserving identification of the two facets' vertices).
Note that both $\{3,4,5,6,7\}_{h_3+6}$ and $\{2,4,5,6,7\}_{h_3+6}$ are on the boundary of our above constructed ball.
These are the only two codimension-one faces of $F_{h_3+6}$ that are used in performing the Theorem \ref{T:basicconstruction} construction on the vector $(1,h_1-1,h_2-1,0,h_4-3,0,0)$.
So we can finish the shelling in this manner which results in a ball with the desired $h$-vector.
\end{proof}

\section{A Summary of Known Conditions}{\label{S:Summary}}

Using the results of the previous two sections we now fully characterize all of the $h$-vectors of simplicial posets that are balls up through dimension six.

\begin{prop}[Dimension 3]{\label{T:dim3}}
Let $\mathbf{h}= (1,h_1,h_2,h_3,0) \in \Z^{5}$.
Then there exists a simplicial poset $P$ such that $|\Gamma(P)|$ is a three-ball and $\mathbf{h} = h(P)$ if and only if the following all hold.
\begin{enumerate}
 \item $h_i \geq 0$ for $1 \leq i \leq 3$.
 \item $h_3 \leq h_1+1$.
 \item If $h_1 =0$ and $h_3=1$ then $h_2$ is even.
\end{enumerate}
\end{prop}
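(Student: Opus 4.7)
The plan is to prove the two directions separately, drawing on the machinery developed earlier in the paper.

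\emph{Necessity.} Theorem~\ref{T:BasicCond} immediately provides $h_0 = 1$, $h_4 = 0$, condition~(1), and the inequality $\partial h_1 = 1 + h_1 - h_3 \geq 0$, which rearranges to condition~(2). For condition~(3), I would assume $h_1 = 0$ and $h_3 = 1$. Then $h_1(\partial\Gamma(P)) = \partial h_1 = 0$, and combined with $h_1(P) = 0$, Proposition~\ref{T:bh1=0} (with $k = 1$) yields that $\sum_{i=0}^4 h_i = 2 + h_2$ is even, hence $h_2$ is even.

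\emph{Sufficiency.} Suppose $\mathbf{h}$ satisfies (1)--(3). My plan is to case-split on the value of $\partial h_1$, which is nonnegative by (2). If $\partial h_1 > 0$, equivalently $h_3 \leq h_1$, then the first case of Theorem~\ref{T:basicconstruction} directly produces a $3$-ball with $h$-vector $\mathbf{h}$, and we are done. The remaining possibility is the boundary case $\partial h_1 = 0$, equivalently $h_3 = h_1 + 1$. Here the second case of Theorem~\ref{T:basicconstruction} does not apply, since its range $0 < n < \lfloor (d-1)/2 \rfloor = 1$ is empty in dimension $d = 4$, and Theorem~\ref{T:NonZeroConstruction} also fails to help (its reduction $\mathbf{h}' = (1, h_1 - 1, h_2 - 1, h_1, 0)$ inherits the condition $\partial h'_1 = 0$). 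So one must build the ball explicitly. I would take as a "core" the realization of $\mathbf{h} = (1,0,0,1,0)$ given by two tetrahedra glued along three of their four triangles, and then extend by shelling: each additional facet sharing two codimension-one faces with the existing complex contributes one to $h_2$, while a pair consisting of one facet sharing one codimension-one face with one sharing three contributes one each to $h_1$ and $h_3$. Proposition~\ref{T:ShellBall} confirms the resulting complex is a $3$-ball.

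The chief obstacle is this last explicit shelling. One must verify that at every stage the current boundary contains compatibly-arranged codimension-one faces to support the next attachment --- in particular a restriction-size-$3$ facet requires three boundary triangles forming the boundary-star of a vertex, which constrains the order in which the various facets can be added. Condition~(3) is precisely what permits the pairing of restriction-size-$1$ and restriction-size-$3$ facets to terminate consistently alongside the single unpaired restriction-size-$3$ facet already present in the core. Once this verification is carried out, combining with the $\partial h_1 > 0$ case finishes sufficiency.
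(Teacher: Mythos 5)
Your necessity argument is correct, and nearly the same as the paper's: you derive conditions (1) and (2) from Theorem~\ref{T:BasicCond}, and for condition (3) you invoke Proposition~\ref{T:bh1=0} (using $h_1(\partial\Gamma(P))=0$ and $h_1(P)=0$). The paper instead invokes Proposition~\ref{T:h1=0} (using $h_1(P)=0$ and $h_1(\partial\Gamma(P))=0$); both apply here, so this is a harmless variation.

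Your sufficiency argument has a gap in the boundary case $\partial h_1=0$, and it stems from reading the bound ``$0<n<\lfloor(d-1)/2\rfloor$'' in Theorem~\ref{T:basicconstruction} as a genuine strict inequality. Taken literally that makes the second case of that theorem vacuous for $d=4$, but it also leaves $\partial h_1=0$ uncovered by either case, which should have flagged a misprint: the first case requires strict positivity only up to index $\lfloor(d-1)/2\rfloor$, so the alternative case is meant to pick up $n\leq\lfloor(d-1)/2\rfloor$. Indeed the paper's own proofs of Propositions~\ref{T:dim3}, \ref{T:dim5}, and \ref{T:dim6} all take $n=\lfloor(d-1)/2\rfloor$. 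Under the intended reading, the existing machinery already covers the case you tried to handle by hand. If $\partial h_1=0$ and $h_2$ is even, apply the second case of Theorem~\ref{T:basicconstruction} with $n=1$: the parity hypothesis holds, and the only inequality required is $\partial h_2\leq h_2$, which is automatic because $\partial h_2=\partial h_1=0$. If $\partial h_1=0$ and $h_2$ is odd, then condition~(3) rules out $h_1=0$; since $h_3=h_1+1$ this forces $h_1,h_2,h_3>0$, so Theorem~\ref{T:NonZeroConstruction} applies with $k=3$. The reduced vector $\mathbf{h'}=(1,h_1-1,h_2-1,h_3-1,0)$ still has $\partial h'_1=0$, as you note, but that is not an obstruction: $\sum h'_i=\sum h_i-3$ is now even and the second case of Theorem~\ref{T:basicconstruction} handles $\mathbf{h'}$. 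In place of this, you offer a sketch of an ad hoc shelling that you yourself mark as unverified; as written that sketch does not constitute a proof, since it leaves exactly the nontrivial consistency checks (well-definedness of the identifications, the shelling condition, and the hypotheses of Proposition~\ref{T:ShellBall}) that Theorem~\ref{T:basicconstruction} was designed to dispatch once and for all. Your $\partial h_1>0$ branch is fine; the $\partial h_1=0$ branch needs to be closed, either by completing the explicit construction or, more simply, by using the paper's constructions under the corrected bound.
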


\begin{proof}
The necessity of the first two conditions follows directly from Theorem \ref{T:BasicCond}, while the third condition is a consequence of Proposition \ref{T:h1=0}.

When $h_3 > h_1+1$ sufficiency follows from the first case of Theorem \ref{T:basicconstruction}.
If $h_3 = h_1+1$ and $h_2$ is even the second case of Theorem \ref{T:basicconstruction} gives the desired result.
Otherwise, $h_3 = h_1+1 > 1$ and $h_2$ is odd which means all of the $h_i$ for $0 \leq i \leq 3$ are non-zero and we can apply Theorem \ref{T:NonZeroConstruction} to obtain the desired construction.
\end{proof}

\begin{prop}[Dimension 4]{\label{T:dim4}}
Let $\mathbf{h}= (1,h_1,h_2,h_3,h_4,0) \in \Z^{6}$.
Then there exists a simplicial poset $P$ such that $|\Gamma(P)|$ is a four-ball and $\mathbf{h} = h(P)$ if and only if the following all hold.
\begin{enumerate}
 \item $h_i \geq 0$ for $1 \leq i \leq 4$.
 \item $h_4 \leq h_1+1$.  If $h_4 = h_1 +1$ then $h_2-h_3$ is even.
 \item $h_3+h_4 \leq h_1 + h_2 + 1$.
\end{enumerate}
\end{prop}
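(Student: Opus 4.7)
The plan is to establish necessity directly from Theorem \ref{T:BasicCond} and to prove sufficiency by a case analysis on which of $\partial h_1 = 1 + h_1 - h_4$ and $\partial h_2 = 1 + h_1 + h_2 - h_3 - h_4$ vanish, invoking the constructions of Section \ref{S:Constructions} in each case.

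For necessity, condition 1 is the Cohen-Macaulay positivity clause, and the inequalities in conditions 2 and 3 are the $j = 1$ and $j = 2$ instances of $\partial h_j \geq 0$ from Theorem \ref{T:BasicCond}. The parity clause in condition 2 uses the ``further'' part of the same theorem: when $h_4 = h_1 + 1$ we have $\partial h_1 = 0$, and since $d = 5$ is odd this forces $\sum_{i=0}^5 h_i$ to be even; substituting the equality into this sum gives $2 + 2h_1 + h_2 + h_3$, so $h_2 - h_3$ must be even.

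For sufficiency I split into three cases. When $\partial h_1 > 0$ and $\partial h_2 > 0$, the first case of Theorem \ref{T:basicconstruction} applies directly. When $\partial h_1 = 0$, condition 2 yields $h_2 - h_3$ even and hence $\sum h_i$ even, so the second case of Theorem \ref{T:basicconstruction} with $n = 1$ applies; its side conditions $\partial h_l \leq h_l$ for $l = 2, 3$ simplify to $h_3 \geq 0$ and $\partial h_1 \leq h_3$, both automatic. The remaining case is $\partial h_1 > 0$ and $\partial h_2 = 0$, where the equality $h_3 + h_4 = h_1 + h_2 + 1$ forces $\sum h_i = 2(1 + h_1 + h_2)$ to be automatically even, but $\lfloor (d-1)/2 \rfloor = 2$ means this case is not literally covered by the alternative clause of Theorem \ref{T:basicconstruction}.

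This last case is the main obstacle. My approach is to mimic the proof strategy of the alternative case of Theorem \ref{T:basicconstruction} with $n = 2$: first build a ball realizing $\mathbf{h}' = (1, h_1, h_2, h_3 - 1, h_4, 0)$ using the first case (both $\partial h_1' = \partial h_1 > 0$ and $\partial h_2' = \partial h_2 + 1 = 1$), and then attach one final shelling facet whose restriction face has size three, using the identification scheme ``$c_{(a+1)/2} = d - n = 3$'' from the proof of Theorem \ref{T:basicconstruction}. The work is to check that the analogues of Claims \ref{C:consist}, \ref{C:intersect}, and \ref{C:ball} still go through at this boundary value $n = \lfloor (d-1)/2 \rfloor$, and to handle the degenerate subcases (when some $h_i = 0$, so that Theorem \ref{T:NonZeroConstruction} is not available because the nonzero $h_i$ are not contiguously supported) by the same direct extension of the shelling construction.
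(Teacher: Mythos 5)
Your proposal takes the same route as the paper's proof: necessity from Theorem~\ref{T:BasicCond} (with the parity clause supplied by its ``further'' part), and sufficiency by casing on which of $\partial h_1, \partial h_2$ vanish and applying the two halves of Theorem~\ref{T:basicconstruction}. You are right that when $\partial h_1 > 0$ but $\partial h_2 = 0$, the alternative clause of Theorem~\ref{T:basicconstruction} as stated requires $0 < n < \lfloor(d-1)/2\rfloor = 2$, whereas here $n = 2$; the paper's proof of Proposition~\ref{T:dim4} silently uses this boundary value, and you were right to flag it. Your proposed extension to $n = \lfloor(d-1)/2\rfloor$ is correct, and the verification you deferred is short: the strict inequality on $n$ is invoked in the proof of Theorem~\ref{T:basicconstruction} only to establish $\sum_{m=0}^n h_m \leq (a+1)/2$, and when $d$ is odd and $n = (d-1)/2$ the index sets $\{0,\ldots,n\}$ and $\{d-n,\ldots,d\}$ partition $\{0,\ldots,d\}$, so $\partial h_n = 0$ gives this bound with equality, while the side inequalities $\partial h_l \leq \sum_{i=0}^{n-1} h_{l-i}$ are vacuous because the range $n+1 \leq l \leq d-(n+1)$ is empty. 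Once that extension is in hand your closing worry is moot: no auxiliary construction (Theorem~\ref{T:NonZeroConstruction} or otherwise) is needed in dimension four, and there are no degenerate subcases left over that require separate treatment.
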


\begin{proof}
Necessity follows directly from Theorem \ref{T:BasicCond}.
When the second and third conditions are satisfied with strict inequality then sufficiency is a result of the first case of Theorem \ref{T:basicconstruction}.
If $h_4 = h_1 +1$ the fact that $h_2-h_3$ is even allows us to apply the second case of Theorem \ref{T:basicconstruction} to obtain the desired construction (it is straightforward to check that the inequalities needed for this construction are all satisfied in this case).
If $h_4 < h_1+1$ and $h_3+h_4 = h_1 + h_2 + 1$ we can again apply the second case of Theorem \ref{T:basicconstruction}.
\end{proof}

\begin{prop}[Dimension 5]{\label{T:dim5}}
Let $\mathbf{h}= (h_0,h_1,h_2,h_3,h_4,h_5,h_6) \in \Z^{7}$ with $h_0=1$ and $h_6=0$.
Let $\partial h_j = \sum_{i=0}^j (h_i - h_{6-i})$ for $0 \leq j \leq 5$.
Then there exists a simplicial poset $P$ such that $|\Gamma(P)|$ is a five-ball and $\mathbf{h} = h(P)$ if and only if the following all hold.
\begin{enumerate}
\item $h_i \geq 0$ for $1 \leq i \leq 5$.
 \item $\partial h_1 \geq 0$. If $\partial h_1 =0$ then $h_i \geq \partial h_i$ for $0 \leq i \leq 5$.  If $\partial h_1 =0$ and $h_j = 0$ for some $1 \leq j \leq 4$ then $\sum_{i=0}^{5} h_i$ is even.
 \item $\partial h_2 \geq 0$.  If $\partial h_2 =0$ and $h_1=0$ or $h_2=0$ then $\sum_{i=0}^{5} h_i$ is even.
\end{enumerate}
\end{prop}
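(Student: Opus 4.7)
The plan is to establish necessity by invoking the preceding theorems and propositions, then prove sufficiency by a case analysis on $(\partial h_1, \partial h_2)$ and the parity of $\sum_i h_i$, applying in each case one of the constructions from Section~\ref{S:Constructions}.

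For necessity, condition~1 and the inequalities $\partial h_1, \partial h_2 \geq 0$ are instances of Theorem~\ref{T:BasicCond}. The monotonicity inequalities $h_i \geq \partial h_i$ assumed when $\partial h_1 = 0$ follow from Theorem~\ref{T:Monotone}, using equation~(\ref{E:Boundary}) to identify $\partial h_j$ with $h_j(\partial \Gamma(P))$. For the parity clause in condition~2, having $\partial h_1 = 0$ together with some $h_j(P) = 0$ for $1 \leq j \leq 4$ falls under Proposition~\ref{T:bh1=0}. For the parity clauses in condition~3, $\partial h_2 = 0$ means $h_2(\partial \Gamma(P)) = 0$ with $2$ strictly between $0$ and $d-1 = 5$; then $h_1(P) = 0$ triggers Proposition~\ref{T:h1=0} and $h_2(P) = 0$ triggers Proposition~\ref{T:h2=0}, each with $k = 2$.

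For sufficiency I split into three cases. In Case~A, where $\partial h_1 > 0$ and $\partial h_2 > 0$, the first case of Theorem~\ref{T:basicconstruction} applies directly. In Case~B, where $\partial h_1 > 0$ and $\partial h_2 = 0$, the identity $h_4 + h_5 = 1 + h_1 + h_2$ combined with $h_5 \leq h_1$ forces $h_4 \geq 1$. If $\sum h_i$ is even, apply the second case of Theorem~\ref{T:basicconstruction} with $n = 2$; the sole nontrivial inequality $\partial h_3 \leq h_2 + h_3$ is immediate since $\partial h_3 = \partial h_2 = 0$. If $\sum h_i$ is odd, condition~3 gives $h_1, h_2 \geq 1$, and since $\sum h_i = 2 + 2h_1 + 2h_2 + h_3$ in this case, $h_3$ must be odd and hence at least $1$. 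Then: if $h_5 = 0$, apply Proposition~\ref{T:Dim5Construction}; if $h_5 \geq 1$, every $h_j$ with $1 \leq j \leq 5$ is positive, so Theorem~\ref{T:NonZeroConstruction} reduces the problem to $\mathbf{h}' = (1, h_1 - 1, \ldots, h_5 - 1, 0)$, and a short check confirms that $\mathbf{h}'$ meets the hypotheses of the second case of Theorem~\ref{T:basicconstruction} with $n = 2$.

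In Case~C, where $\partial h_1 = 0$, we have $h_5 = h_1 + 1 \geq 1$ together with the monotonicity $h_i \geq \partial h_i$ throughout. If $\sum h_i$ is even, the second case of Theorem~\ref{T:basicconstruction} with $n = 1$ applies, since the required bounds $\partial h_l \leq h_l$ for $2 \leq l \leq 4$ are all automatic from condition~2. If $\sum h_i$ is odd, condition~2 rules out any vanishing $h_j$ with $1 \leq j \leq 4$, so every $h_j$ for $1 \leq j \leq 5$ is positive and Theorem~\ref{T:NonZeroConstruction} applies. The critical verification is that $\mathbf{h}'$ satisfies the second case of Theorem~\ref{T:basicconstruction} with $n = 1$; the tight inequality here is $\partial h_2 < h_3$, which holds because $\partial h_2 = h_3$ would make $h_2 + h_3 + h_4$ even, whereas the oddness of $\sum h_i = 2 + 2h_1 + h_2 + h_3 + h_4$ requires $h_2 + h_3 + h_4$ to be odd. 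The main obstacle is the bookkeeping across the parity-driven subcases: in each one we must confirm that the chosen construction's prerequisites hold automatically from conditions~1--3, and the most delicate point is the strict inequality $\partial h_2 < h_3$ in the last subcase, which cannot be read off from the displayed monotonicity bounds and must instead be extracted from the parity hypothesis on $\sum h_i$.
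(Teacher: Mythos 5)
Your proposal is correct and follows essentially the same route as the paper: necessity from Theorem~\ref{T:BasicCond}, Theorem~\ref{T:Monotone}, and Propositions~\ref{T:bh1=0}, \ref{T:h1=0}, \ref{T:h2=0}; sufficiency by casing on $(\partial h_1,\partial h_2)$ and the parity of $\sum h_i$, applying Theorem~\ref{T:basicconstruction}, Theorem~\ref{T:NonZeroConstruction}, and Proposition~\ref{T:Dim5Construction} exactly where the paper does. You actually spell out the ``it is not hard to check'' verifications that the paper leaves implicit, including the parity argument for the strict inequality at $l=3$ (your $\partial h_2 < h_3$ is the same as the paper's $\partial h_3 < h_3$ since $\partial h_2 = \partial h_3$ when $\partial h_1 = 0$).
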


\begin{proof}
Then necessity of condition one and the first inequality in the other two conditions follow from Theorem \ref{T:BasicCond}.
The additional inequalities when $\partial h_1 =0$ come from Theorem \ref{T:Monotone}.
The remainder of condition two comes from Proposition \ref{T:bh1=0}.
For the third condition, the case $h_1 = 0$ follows from Proposition \ref{T:h1=0} while the case $h_2=0$ is a result of Proposition \ref{T:h2=0}.

For sufficiency, if there is strict inequality in the second and third conditions we use the first case of Theorem \ref{T:basicconstruction}.
If $\partial h_1 =0$ and $\sum_{i=0}^{5} h_i$ is even then we can apply the second case of Theorem \ref{T:basicconstruction}.
If $\partial h_1 =0$ and $\sum_{i=0}^{5} h_i$ is odd then all of the $h_i$ for $1 \leq i \leq d-1$ are non-zero, so we apply Theorem \ref{T:NonZeroConstruction}.
It is not hard to check that reducing the elements of the $h$-vector by one will preserve the inequalities $h_i \geq \partial h_i$ for $0 \leq i \leq 5$.
The case $h_3 \geq \partial h_3$ uses the fact that the sum of the $h_i$ is odd.

If $\partial h_2 =0$, the inequality in the second case of Theorem \ref{T:basicconstruction} is always trivially satisfied.
This gives the needed construction whenever $\sum_{i=0}^{5} h_i$ is even.
For the case where $\sum_{i=0}^{5} h_i$ is odd, when $h_i>0$ for $1 \leq i \leq 5$ we can apply Theorem \ref{T:NonZeroConstruction}.
Otherwise we use Proposition \ref{T:Dim5Construction}.
\end{proof}

\begin{prop}[Dimension 6]{\label{T:dim6}}
Let $\mathbf{h}= (h_0,h_1,h_2,h_3,h_4,h_5,h_6,h_7) \in \Z^{8}$ with $h_0=1$ and $h_7=0$.
Let $\partial h_j = \sum_{i=0}^j (h_i - h_{7-i})$ for $0 \leq j \leq 6$.
Then there exists a simplicial poset $P$ such that $|\Gamma(P)|$ is a six-ball and $\mathbf{h} = h(P)$ if and only if the following all hold.
\begin{enumerate}
 \item $h_i \geq 0$ for $1 \leq i \leq 5$.
 \item $\partial h_i \geq 0$ for $0 \leq i \leq 3$.
 \item If $\partial h_i = 0$ for any $1 \leq i \leq 3$ then $\sum_{i=0}^{6} h_i$ is even.
 \item If $\partial h_1 = 0$ then $h_i (P) \geq \partial h_i$ for all $i \geq 0$.
\end{enumerate}
\end{prop}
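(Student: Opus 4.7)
My approach mirrors Propositions \ref{T:dim3}--\ref{T:dim5}: necessity follows from the general results in Sections \ref{S:Boundary}--\ref{S:Parity}, and sufficiency from a case split using the constructions of Section \ref{S:Constructions}.

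For necessity, condition 1 (non-negativity of each $h_i$) is immediate from Stanley's characterization of the $h$-vectors of Cohen-Macaulay posets. Condition 2 ($\partial h_i \geq 0$ for $0 \leq i \leq 3$) is Theorem \ref{T:BasicCond} applied with $d=7$, and since $d$ is odd the same theorem also forces $\partial h_j > 0$ in that range whenever $\sum h_i$ is odd; this gives condition 3. Condition 4 is Theorem \ref{T:Monotone} combined with the identity $h_i(\partial \Gamma(P)) = \partial h_i$ coming from equation (\ref{E:Boundary}).

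For sufficiency I would split into two branches. In \emph{Branch A}, all $\partial h_j > 0$ for $0 \leq j \leq 3$, and I apply the first case of Theorem \ref{T:basicconstruction} directly. In \emph{Branch B}, there is a smallest $n \in \{1,2,3\}$ with $\partial h_n = 0$; condition 3 then forces $\sum h_i$ to be even, and I apply the second case of Theorem \ref{T:basicconstruction}. The hypothesis to verify is $\partial h_l \leq \sum_{i=0}^{n-1} h_{l-i}$ for $n+1 \leq l \leq 6-n$. When $n=1$ this reads $\partial h_l \leq h_l$ for $2 \leq l \leq 5$, which is exactly condition 4. When $n=2$ it reads $\sum_{i=0}^{l-2} h_i \leq \sum_{i=7-l}^{7} h_i$ for $l=3,4$, and the identity $h_0+h_1+h_2 = h_5+h_6$ forced by $\partial h_2 = 0$ together with non-negativity reduces each case to a trivial inequality. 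When $n=3$ the range of $l$ is empty so there is nothing to check.

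The main technical issue is that the second case of Theorem \ref{T:basicconstruction} is stated with the restriction $0 < n < \lfloor (d-1)/2 \rfloor$, which excludes $n=3$ for $d=7$. A careful reading of the construction shows that this restriction is used only to guarantee the bound $c_{(a+1)/2} \geq n$, and the proof actually needs only $n < d/2$; since $3 < 7/2$ the argument extends. This same mild extension was already used tacitly in the proof of Proposition \ref{T:dim5} for $n = \lfloor 5/2 \rfloor = 2$. One also needs to confirm, in each sub-case of Branch B, that the decremented entry $h_{d-n}$ is strictly positive; this is the observation in the proof of Theorem \ref{T:basicconstruction} that the smallest $n$ with $\partial h_n = 0$ (and $\partial h_{n-1} > 0$) forces $h_{d-n} > 0$. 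No appeal to Theorem \ref{T:NonZeroConstruction} is necessary in dimension 6, because condition 3 eliminates the odd-sum sub-cases that required an auxiliary construction in the lower-dimensional propositions.
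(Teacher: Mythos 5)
Your proof is correct and follows essentially the same route as the paper: necessity from Theorem~\ref{T:BasicCond} (with the odd-$d$ parity clause giving condition~3) and Theorem~\ref{T:Monotone}, sufficiency from the two cases of Theorem~\ref{T:basicconstruction} with the same $n=1,2,3$ sub-cases, verified exactly as the paper does (condition~4, easy calculation, vacuous). Your observation that the hypothesis $0 < n < \lfloor (d-1)/2 \rfloor$ in Theorem~\ref{T:basicconstruction} should be non-strict (the argument only needs $2n+1 \leq d$) is a legitimate clarification of a typo that the paper's own proofs of Propositions~\ref{T:dim5} and~\ref{T:dim6} tacitly assume.
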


\begin{proof}
For the first three conditions necessity follows directly from Theorem \ref{T:BasicCond}, while for the last condition it is a result of Theorem \ref{T:Monotone}.
When the second condition is satisfied with all strict inequalities sufficiency is proved using the first case of Theorem \ref{T:basicconstruction}; otherwise we use the second case of Theorem \ref{T:basicconstruction}.
The fact that the inequalities of Theorem \ref{T:basicconstruction} are satisfied follows by assumption for the case where $\partial h_1 =0$ and an easy calculation for the case $\partial h_2 =0$.  The inequalities are vacuous for the case $\partial h_3 =0$.
\end{proof}

For even dimensional balls ($d$ odd), when any entry of the boundary $h$-vector is zero the sum of the $h_i$ of the ball must have even parity.
In the odd dimensional case, this relationship is lost and more subtle behavior occurs.
In particular, whether or not some of the $h_i$ of the ball are zero needs to be considered, resulting in some of the more complicated conditions and the extra construction in the five dimensional case.

When we move beyond dimension six, we no longer have a complete characterization of the possible $h$-vectors even for the even dimensional cases.
In particular, the inequalities in the second part of Theorem \ref{T:basicconstruction} are known to be necessary only for the case $l = 1$.
In higher dimensions, it is possible to create $h$-vectors that satisfy all of our known necessary conditions but violate these inequalities for $l > 1$.
For example, in dimension eight we do not know if the vector $(1,1,1,5,1,1,1,2,1,0)$, with corresponding boundary $h$-vector $(1,1,0,4,4,4,0,1,1)$, is the $h$-vector of a simplicial poset that is a ball.
For balls of odd dimension the situation is even less clear.
We still need a general framework to describe what types of conditions on the $h$-vectors of the balls and their boundary spheres require an even number of facets.
Additionally, as we saw in dimension five more constructions are needed to obtain all possible $h$-vectors.

\section*{Acknowledgments}
The author would like to thank Ed Swartz for his valuable advice and support.

\bibliographystyle{plain}
\bibliography{Ballsbib}

\end{document}